\documentclass[12pt]{article}


\usepackage[a4paper, margin=1in]{geometry}
\usepackage[utf8]{inputenc}
\usepackage{amsmath}
\usepackage{amsfonts} 
\usepackage{amssymb}
\usepackage{amsthm}
\usepackage{subcaption}
\usepackage{graphicx}
\usepackage{pst-node}
\usepackage{float}
\usepackage{thmtools, thm-restate}
\usepackage{hyperref}
\usepackage[capitalize]{cleveref}

\graphicspath{ {./figures/} }

\newcommand{\Nn}{\mathbb{N}}

\newcommand{\Zz}{\mathbb{Z}}

\newtheorem{thm}{Theorem}

\theoremstyle{definition}
\newtheorem{defn}[thm]{Definition}
\newtheorem{rem}[thm]{Remark}
\newtheorem{lem}[thm]{Lemma}
\newtheorem{prop}[thm]{Proposition}
\newtheorem{cor}[thm]{Corollary}

\numberwithin{thm}{section}
\numberwithin{defn}{section}
\numberwithin{rem}{section}
\numberwithin{conj}{section}
\numberwithin{lem}{section}
\numberwithin{cor}{section}
\numberwithin{example}{section}
\numberwithin{prop}{section}
\numberwithin{figure}{section}

\allowdisplaybreaks

\begin{document}

\title{Expected Number of Dice Rolls Until an Increasing Run of Three}

\author{Daniel Chen}
\date{}

\maketitle

\begin{abstract}
    A closed form is found for the expected number of rolls of a fair n-sided die until three consecutive increasing values are seen. The answer is rational, and the greatest common divisor of the numerator and denominator is given in terms of n.
    
    As n goes to infinity, the probability generating function is found for the limiting case, which is also the exponential generating function for permutations ending in a double rise and without other double rises. Thus exact values are found for the limiting expectation and variance, which are approximately 7.92437 and 27.98133 respectively.
\end{abstract}

\section{Introduction}

The following problem is studied: What is $E_k(n)$, the expected number of rolls of a fair $n$-sided die until $k$ consecutive increasing values are seen?

When $k=2$, the solution is quickly derivable as $E_2(n) = \left(\frac{n}{n-1}\right)^n$, which approaches $e$ as $n \to \infty$. The $k=3$ case is more complex, and is the focus of this paper.

In \cref{sec:discrete}, it will be proven that $E_3(n)$ can be computed as follows. Define the sequence of functions $(a_i(x))_{i \in \Nn}$:
\[a_1(x) = x-1, a_2(x) = x(x-2), a_{r+2}(x) = (2x-1)a_{r+1}(x) - (x^2-x+1)a_r(x)\]
Then the $a_i(x)$ are polynomials with integer coefficients,  and $E_3(n) = \frac{n^n}{a_n(n)}$ (\cref{thm:disc_soln}). This motivates looking at the gcd of the numerator and denominator, and it is shown (\cref{thm:gcd}) that:
\[\gcd(n^n, a_n(n)) = \begin{cases}
    2^{1+\nu_2(\lfloor \frac{n}{12}\rfloor)}n^2 & \text{if } n \equiv 2 \text{ (mod 12)}\\
    n^2 & \text{if } n \equiv 5,8,11 \text{ (mod 12)}\\
    1 & \text{otherwise}
\end{cases}\]

Furthermore, the closed form of $E_3(n)$ is given (\cref{prop:closedform}), which implies that $E_3(n)$ can be computed in logarithmic time using repeated squaring. It is also proven that $E_3(n) \in \Nn \iff n=3$ (\cref{thm:e3_not_int}).

In \cref{sec:cont}, the limiting case will be focused on (as $n \to \infty$), which is equivalent to sampling real numbers between 0 and 1 until a run of three increasing values is seen. The probability generating function is found (\cref{thm:pgf}) to be:
\[P(x) = 1 + \frac{\sqrt3}{2}e^{\frac{x}{2}} (x-1) \sec\left(\frac{\pi}{6} + x\frac{\sqrt3}{2}\right)\]
where the coefficient of $x^r$ in $P(x)$ is the probability that the sampling process described ends on the $r$-th turn. Note that (other than the constant term) $P(x)$ is equal to the exponential generating function for the number of permutations of a given length with no increasing run of length 3 or greater, except for having one of length 3 at the end. This is a similar definition to David and Barton's generating function \cite{david62}, which counts the number of permutations with no increasing run of length 3 or greater.

$P(x)$ is found using the same combinatorial method as \cite{gessel14}. In this, the authors derive the exponential generating for permutations with all valleys even and all peaks odd, using a system of recurrence relations; the same is done here.

The first few coefficients of $\frac{x^n}{n!}$ in $P(x)$ are:
\begin{center}
\begin{tabular}{ c|c|c|c|c|c|c|c|c|c|c|c|c } 
$n$ & 0 & 1 & 2 & 3 & 4 & 5 & 6 & 7 & 8 & 9 & 10\\ 
\hline
$n!p(n)$ & 0 & 0 & 0 & 1 & 3 & 15 & 71 & 426 & 2778 & 20845 & 171729\\
\end{tabular}
\end{center}

Having found the generating function, the limiting expectation and variance are immediate (\cref{cor:mu_and_var}):
\[\mu = \frac{\sqrt{3e}}{\sqrt3C - S}\]
\[\text{Var} = \frac{9eC + \sqrt{3e}S - 3e}{(\sqrt3C-S)^2}\]
where $S=\sin\left(\frac{\sqrt3}{2}\right)$ and $C=\cos\left(\frac{\sqrt3}{2}\right)$.

\section{Solving for an $n$-sided die}\label{sec:discrete}

Let $E_3(n)$ be the expected number of rolls of a fair $n$-sided die until a run of three increasing values is seen. In this section, we will solve for $E_3(n)$ in terms of $n$.

\begin{defn}[$\mu_i$]
For each $i \in \Nn, 1\leq i \leq n$, let $\mu_i$ be the expected number of further rolls in the process given that the previous roll was $i$ (and we ignore any runs containing rolls before the previous roll).
\end{defn}

Note that $\mu_n = E_3(n)$, because having just rolled $n$ is identical to the start.
We may now form a system of simultaneous equations for each $\mu_i$, in order to find $\mu_n$.

\begin{prop}\label{prop:ui}
For each $1 \leq i \leq n$,
\[\mu_i = \frac{2n-i}{n} + \frac{2n-i}{n^2}\left(\sum_{j=1}^i\mu_j\right) + \frac{1}{n^2} \sum_{j=i+1}^n (n-i+1)\mu_j\]
\end{prop}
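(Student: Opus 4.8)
The plan is to run a first-step analysis on the underlying Markov chain, conditioning on the value of the next roll. The one complication is that deciding whether the process terminates requires knowing whether the last two rolls were already increasing, yet $\mu_i$ is defined to forget precisely this. I would therefore introduce a single auxiliary quantity and then eliminate it: let $\nu_j$ be the expected number of further rolls given that the previous roll was $j$ and the roll before it was strictly smaller, so that one increase is already ``pending.'' The strategy is to write coupled first-step equations for $\mu_i$ and $\nu_j$ and then substitute the latter into the former.

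For $\mu_i$, I would roll once (contributing $1$) and condition on the outcome $j$, uniform on $\{1,\dots,n\}$. If $j\le i$ there is no increase, and since the process ignores all runs involving rolls before the current one, we return to a fresh state with previous roll $j$, contributing $\mu_j$. If $j>i$ we have created a pending increase, contributing $\nu_j$. This gives
\[\mu_i = 1 + \frac{1}{n}\left(\sum_{j=1}^{i}\mu_j + \sum_{j=i+1}^{n}\nu_j\right).\]
For $\nu_j$ I would condition similarly: a next value exceeding $j$ completes the increasing run of three and stops the process (no further rolls), while a value $k\le j$ resets us to the fresh state $\mu_k$. Hence
\[\nu_j = 1 + \frac{1}{n}\sum_{k=1}^{j}\mu_k.\]

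Substituting the second identity into the first is then bookkeeping. The term $\sum_{j=i+1}^{n}1 = n-i$ merges with the leading $1$ to give the constant $\frac{2n-i}{n}$, and what remains is $\frac{1}{n}\sum_{j=1}^{i}\mu_j$ together with the double sum $\frac{1}{n^2}\sum_{j=i+1}^{n}\sum_{k=1}^{j}\mu_k$. The step I expect to require the most care is reindexing this double sum by swapping the order of summation: each $\mu_k$ with $k\le i$ is retained for every outer index $j\in\{i+1,\dots,n\}$, giving coefficient $n-i$, which combines with the $\frac{1}{n}\sum_{j=1}^{i}\mu_j$ term to produce the stated $\frac{2n-i}{n^2}\sum_{j=1}^{i}\mu_j$; whereas each $\mu_k$ with $k>i$ survives only for the outer indices at least $k$, i.e.\ $n-k+1$ of them, and collecting these weighted terms (after renaming the dummy index) assembles the final sum.

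The main obstacle is thus modeling rather than computation: correctly isolating the pending-increase state $\nu_j$, and justifying both ``reset to a fresh state'' claims directly from the definition of $\mu_i$ and its clause that prior runs are ignored (a Markov/memorylessness property of the process). Once the coupled equations are in hand, the rest is the summation-order swap and a short collection of like terms.
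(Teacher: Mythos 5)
Your proof is correct and is essentially the paper's own argument: your auxiliary quantity $\nu_k$ is exactly what the paper computes inline as the conditional expectation $2+\frac{1}{n}\sum_{r=1}^{k}\mu_r$ (namely $1+\nu_k$), and the subsequent substitution and summation-order swap are identical. One remark: your derivation yields the tail coefficient $n-j+1$, which agrees with the final line of the paper's proof and with the matrix entries used afterwards; the $(n-i+1)$ appearing in the proposition as printed is evidently a typo.
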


\begin{proof}
To calculate $\mu_i$, consider the situation where we have just rolled $i$ and it was a decrease. Consider our next roll $k$. Note that each case of $k$ occurs with probability $\frac{1}{n}$.
\begin{itemize}
    \item  For each $1 \leq k \leq i$, the process will be $(1+\mu_k)$ rolls long, because our roll did not increase so we reset.
    \item For each $i < k \leq n$, let us find the number of rolls. Consider cases for the third roll $r$, each of which happens with probability $\frac{1}{n}$. Note that we will definitely have at least two rolls, so we can count extra rolls after this, and add 2 to the result.
        \begin{itemize}
            \item For each $k+1 \leq r \leq n$, the process finishes and so the extra number of rolls after the first two is 0.
            \item For each $1 \leq r \leq k$, the process resets and so the extra number of rolls is $\mu_r$.
        \end{itemize} 
    Hence when $i < k \leq n$, by summing the probabilities multiplied by the outcomes, we obtain that for each of these cases the expected number of rolls is $2 + \sum_{r=1}^k \frac{1}{n}\mu_r$.
\end{itemize}

Now, summing the probabilities multiplied by the outcomes, we have that

\[\mu_i = \left(\sum_{k=1}^i\frac{1}{n}(1+\mu_k)\right) + \left( \sum_{k=i+1}^n \frac{1}{n}\left(2 + \sum_{r=1}^k \frac{1}{n}\mu_r\right)\right)\]

which we may then manipulate to the desired result:
\[\mu_i = \frac{1}{n}\left(i + \sum_{k=1}^i\mu_k\right) + \frac{1}{n}\left( \left(\sum_{k=i+1}^n 2\right) + \frac{1}{n}\sum_{k=i+1}^n\sum_{r=1}^k \mu_r\right)\]
\[= \frac{i}{n} + \frac{\sum_{k=1}^i\mu_k}{n} + \frac{2(n-i)}{n} + \frac{1}{n^2}\left(\left(\sum_{k=1}^n\sum_{r=1}^k \mu_r\right) - \left(\sum_{k=1}^i\sum_{r=1}^k \mu_r\right) \right)\]
\[= \frac{2n-i}{n} + \frac{\sum_{k=1}^i\mu_k}{n} + \frac{1}{n^2}\left(\left(\sum_{r=1}^n(n-r+1)\mu_r\right) - \left(\sum_{r=1}^i(i-r+1)\mu_r\right) \right)\]
\[= \frac{2n-i}{n} + \frac{n}{n^2}\left(\sum_{k=1}^i\mu_k\right) + \frac{1}{n^2}\left( \left(\sum_{r=1}^i((n-r+1)-(i-r+1))\mu_r\right) + \left(\sum_{r=i+1}^n(n-r+1)\mu_r\right) \right)\]
\[= \frac{2n-i}{n} + \frac{2n-i}{n^2}\left(\sum_{j=1}^i\mu_j\right) + \frac{1}{n^2}\left(\sum_{j=i+1}^n(n-j+1)\mu_j\right) \]
\end{proof}

\begin{defn}
For $n \geq 3$, define the $n \times n$ matrix $M_n$, and the $n \times 1$ vectors $V_n$ and $U_n$:
\[M_n:=
\begin{bmatrix}
n^2-1 & n-n^2-1 & 0 & 0 & 0 & \cdots & 0 & 0\\
-1 & n^2-1 & n-n^2-1 & 0 & 0 & \cdots & 0 & 0\\
-1 & -1 & n^2-1 & n-n^2-1 & 0 & \cdots & 0 & 0\\
-1 & -1 & -1 & n^2-1 & n-n^2-1 & \cdots & 0 & 0\\
-1 & -1 & -1 & -1 & n^2-1 & \cdots & 0 & 0\\
\vdots & \vdots & \vdots & \vdots & \vdots & \ddots & \vdots & \vdots\\
-1 & -1 & -1 & -1 & -1 & \cdots & n^2-1 & n-n^2-1\\
-n & -n & -n & -n & -n & \cdots & -n & n^2-n\\
\end{bmatrix}\]

\[V_n:=
\begin{bmatrix} n\\ n\\ \cdots\\ n\\ n^2\\ \end{bmatrix},
\;\; U_n := \begin{bmatrix} \mu_1\\ \mu_2\\ \cdots\\ \mu_n \end{bmatrix}\]

In other words,
\begin{figure}[H]
    \centering
    \begin{subfigure}[b]{0.5\textwidth}
    \centering
    \[m_{ij} = \begin{cases} n^2-n & \text{if } i=j=n\\ n^2-1 & \text{if } i=j, i\neq n\\ n-n^2-1 & \text{if } j=i+1\\ 0 & \text{if } j>i+1\\ -n & \text{if } i=n, j \neq n\\ -1 & \text{else} \end{cases}\]
    \end{subfigure}%
    \begin{subfigure}[b]{0.3\textwidth}
    \centering
    \[v_{i1} = \begin{cases} n^2 & \text{if } i=n\\ n & \text{else} \end{cases}\]
    \end{subfigure}%
    \begin{subfigure}{0.2\textwidth}
    \centering
    \[u_{i1} = \mu_i\]
    \end{subfigure}
\end{figure}

\end{defn}

\begin{prop}\label{lem:mateqn}
$M_n U_n = V_n$
\end{prop}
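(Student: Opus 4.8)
The plan is to read the matrix identity $M_n U_n = V_n$ one row at a time and verify that each row is an algebraic consequence of the relations supplied by \cref{prop:ui}. Writing $S_i := \sum_{j=1}^i \mu_j$ (with $S_0 = 0$), row $i$ of $M_n U_n = V_n$ asserts, for $1 \le i \le n-1$,
\[-S_{i-1} + (n^2-1)\mu_i + (n-n^2-1)\mu_{i+1} = n,\]
while the bottom row asserts $-nS_{n-1} + (n^2-n)\mu_n = n^2$. So the whole task reduces to deriving these $n$ \emph{banded} relations from the $n$ \emph{dense} relations of \cref{prop:ui}. My first move is to clear denominators, multiplying \cref{prop:ui} through by $n^2$, to put each relation in the shape
\[n^2 \mu_i = n(2n-i) + (2n-i)S_i + \sum_{j=i+1}^n (n-j+1)\mu_j.\]

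The bottom row is immediate: for $i=n$ the trailing sum is empty and $S_n$ is the full total, so the relation collapses to $n^2\mu_n = n^2 + nS_n$. Splitting off $nS_n = nS_{n-1} + n\mu_n$ and rearranging gives $(n^2-n)\mu_n - nS_{n-1} = n^2$, which is exactly the last row.

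The interesting rows $1 \le i \le n-1$ I would obtain by subtracting the relation for $\mu_{i+1}$ from that for $\mu_i$. The key point is the asymmetry between the two sums. In the trailing sum the summand coefficient $n-j+1$ does not depend on $i$, so the two trailing sums differ only in their first term, telescoping cleanly to the single term $(n-i)\mu_{i+1}$. By contrast the prefix sum carries the $i$-dependent coefficient $2n-i$, so differencing does \emph{not} annihilate it: the mismatch $(2n-i)S_i - (2n-i-1)S_{i+1}$ collapses to one surviving copy $S_i$ plus a contribution $-(2n-i-1)\mu_{i+1}$. Collecting the $\mu_{i+1}$ terms — from the $-n^2\mu_{i+1}$ on the left-hand side, from the prefix-sum mismatch, and from the telescoped tail — produces the coefficient $n-n^2-1$, and the residual $-S_i = -S_{i-1} - \mu_i$ supplies simultaneously the lower-triangular block of $-1$'s and the shift of the diagonal from $n^2$ to $n^2-1$. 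This reproduces row $i$ exactly.

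The main obstacle is purely bookkeeping: keeping the three contributions to the coefficient of $\mu_{i+1}$ straight, and correctly tracking that the difference leaves one surviving prefix sum $-S_i$ (the origin of the $-1$ entries) rather than cancelling it, which is precisely why the matrix is banded below the superdiagonal rather than diagonal. There are no genuine difficulties beyond this; once the telescoping is arranged, the edge cases $i=1$ (where $S_0=0$, matching the absence of $-1$ entries in the first row) and $i=n-1$ (where $i+1=n$ is an admissible index) need no separate treatment.
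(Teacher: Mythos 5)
Your proposal is correct and follows essentially the same route as the paper: clear denominators by multiplying \cref{prop:ui} by $n^2$, observe that the bottom row of the matrix equation is the $i=n$ relation itself, and obtain row $i$ for $1 \le i \le n-1$ by subtracting the $(i+1)$-th relation from the $i$-th. The telescoping bookkeeping you describe (the surviving $-S_i$ producing the subdiagonal $-1$'s and the diagonal shift, and the three contributions to the $\mu_{i+1}$ coefficient summing to $n-n^2-1$) checks out and matches the paper's verification that $c_{ij}-c_{i+1,j}=m_{ij}$.
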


\begin{proof}
Multipling both sides of \cref{prop:ui} by $n^2$, we obtain:
\[n^2\mu_i = n(2n-i) + (2n-i)\left(\sum_{j=1}^i\mu_r\right) + \sum_{j=i+1}^n (n-j+1)\mu_j\]
\[\implies \left(\sum_{j=1}^{i-1}(i-2n)\mu_j\right) + ((n-1)^2 + i-1)\mu_i + \sum_{j=i+1}^n (j-1-n)\mu_j = 2n^2-in\]
\begin{equation}\label{eqn:convol}\implies \sum_{j=1}^{n}c_{ij}\mu_j = n(2n-i)\end{equation}
Where:
\[c_{ij} = \begin{cases}i-2n & \text{if } 1 \leq j \leq i-1\\ (n-1)^2+i-1 & \text{if } j=i\\j-1-n & \text{if } i+1 \leq j \leq n\end{cases}\]

Now, suppose that $1 \leq i < n$. We replace $i$ with $i+1$ in \cref{eqn:convol}:
\[\sum_{r=1}^{n}c_{i+1,r}\mu_r = n(2n-i-1)\]
This may be subtracted from \cref{eqn:convol}:
\begin{equation}\label{eqn:delta} \sum_{j=1}^{n}\underbrace{(c_{ij} - c_{i+1,j})}_{:=d_{ij}}\mu_j = n \end{equation}
When we consider cases for $d_{ij}$, it can be verified that $d_{ij} = m_{ij}$, i.e. $d_{ij}$ is equal to the corresponding entry in $M_n$.

Hence, when $i\neq n$, the LHS of \cref{eqn:delta} is equal to the $i$-th row of $M_nU_n$ (for all $i\neq n$), because $\mu_j=u_{j1}$. But the RHS of \cref{eqn:delta} is the $i$-th row of $V_n$ by definition, thus for $1 \leq i < n$, the $i$-th row of $M_nU_n$ and $V_n$ are equal.

Now when $i=n$, by \cref{eqn:convol} we deduce that:
\[\implies \sum_{j=1}^{n}c_{nj}\mu_j = n^2\]
Where $c_{nj}$ is $n^2-n$ if $j=n$, and $-n$ otherwise. Hence by definition, $c_{nj} = m_{nj}$, and so similarly to before, we have that the bottom rows of $M_nU_n$ and $V_n$ are equal. But we already showed that the $i$-th rows of $M_nU_n$ and $V_n$ are equal for all $1 \leq i < n$, therefore all rows of $M_nU_n$ and $V_n$ are equal, which implies the result.
\end{proof}

\begin{lem}\label{prop:detH}
let $H$ be the $(n-1) \times (n-1)$ matrix obtained by deleting the top row and rightmost column of $M_n$. Then:
\[\det(H) = (-1)^{n+1}n^{2n-3}\]
\end{lem}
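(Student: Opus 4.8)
The plan is to reduce $H$ to triangular form by one scalar factorization, a single round of column operations, and one cofactor expansion. First I would record the explicit shape of $H$: deleting the top row and last column of $M_n$ leaves an $(n-1)\times(n-1)$ matrix whose first $n-2$ rows inherit the banded pattern of $M_n$ (a run of $-1$'s, then $n^2-1$ on the old diagonal and $n-n^2-1$ on the old superdiagonal, then $0$'s), while the bottom row is the constant vector $(-n,-n,\dots,-n)$.

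The first step is to factor $-n$ out of that bottom row, giving $\det(H)=(-n)\det(H^\ast)$ where $H^\ast$ has bottom row $(1,1,\dots,1)$. Next I would apply the determinant-preserving elementary column operations $c_j\mapsto c_j-c_{j-1}$ performed in the order $j=n-1,n-2,\dots,2$ (so that each $c_{j-1}$ used is the original column). These collapse the all-ones bottom row to $(1,0,\dots,0)$. On a generic banded row the same differences send $(\dots,-1,-1,\,n^2-1,\,n-n^2-1,\,0,\dots)$ to $(\dots,-1,0,\dots,0,\,n^2,\,-n(2n-1),\,n^2-n+1,\,0,\dots)$: the leading $-1$ survives only in column $1$, the interior $-1$'s cancel, and the three surviving band entries $n^2$, $-n(2n-1)$, $n^2-n+1$ sit just to the right of the old diagonal.

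I would then cofactor-expand the resulting matrix along its bottom row $(1,0,\dots,0)$, which contributes the sign $(-1)^{(n-1)+1}=(-1)^n$ and deletes both the bottom row and the first column. Deleting the first column strips off every leftover $-1$, and in the remaining $(n-2)\times(n-2)$ minor the entry $n^2$ falls exactly on the diagonal while $-n(2n-1)$ and $n^2-n+1$ lie strictly above it; the minor is therefore upper triangular with every diagonal entry equal to $n^2$, so its determinant is $(n^2)^{n-2}=n^{2n-4}$. Assembling the three factors yields $\det(H)=(-n)\cdot(-1)^n\cdot n^{2n-4}=(-1)^{n+1}n^{2n-3}$, as required.

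The one place demanding care is the boundary, namely the lowest one or two banded rows, where the superdiagonal entry of $M_n$ (and the would-be $n^2-n+1$ term) falls in the already-deleted column $n$ and is truncated. I expect this bookkeeping — verifying that the truncation removes only above-diagonal entries and never perturbs the diagonal value $n^2$ — to be the main obstacle, alongside pinning down the cofactor sign. Checking a small instance such as $n=5$ (or the edge case $n=3$, where the minor is a single entry $n^2$) is worth recording to confirm both the triangular shape and the overall sign.
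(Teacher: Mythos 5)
Your proposal is correct and is essentially the paper's own argument in transposed form: the paper performs consecutive \emph{row} differences (plus a special manipulation of the bottom two rows) and expands along the first column, while you factor $-n$ out of the bottom row, perform consecutive \emph{column} differences, and expand along the bottom row — both routes produce the same band entries $n^2$, $n-2n^2$, $n^2-n+1$ and reduce to an upper triangular $(n-2)\times(n-2)$ minor. Your version is arguably slightly cleaner at the boundary (all diagonal entries of the minor are $n^2$, versus the paper's trailing $-n^3$), and your sign and boundary bookkeeping check out.
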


\begin{proof}
By definition of $H$,
\[\det(H) = \det\left(
\begin{bmatrix}
-1 & n^2-1 & n-n^2-1 & 0 & 0 & \cdots & 0\\
-1 & -1 & n^2-1 & n-n^2-1 & 0 & \cdots & 0\\
-1 & -1 & -1 & n^2-1 & n-n^2-1 & \cdots & 0\\
-1 & -1 & -1 & -1 & n^2-1 & \cdots & 0\\
\vdots & \vdots & \vdots & \vdots & \vdots & \ddots & \vdots \\
-1 & -1 & -1 & -1 & \cdots & -1 & n^2-1\\
-n & -n & -n & -n & \cdots & -n & -n\\
\end{bmatrix}\right)\]

Now, for each row other than the bottom two, we subtract off the row below it. We also subtract $n$ copies of the penulimate row from the buttom row, and then swap the bottom two rows around:

\[\implies -\det(H) = \det\left(
\begin{bmatrix}
0 & n^2 & n-2n^2 & n^2-n+1 & 0 & \cdots & 0\\
0 & 0 & n^2 & n-2n^2 & n^2-n+1 & \cdots & 0\\
0 & 0 & 0 & n^2 & n-2n^2 & \cdots & 0\\
0 & 0 & 0 & 0 & n^2 & \cdots & 0\\
\vdots & \vdots & \vdots & \vdots & \vdots & \ddots & \vdots \\
0 & 0 & 0 & 0 & \cdots & n^2 & n-2n^2\\
0 & 0 & 0 & 0 & \cdots & 0 & -n^3\\
-1 & -1 & -1 & -1 & \cdots & -1 & n^2-1\\
\end{bmatrix}\right)\]

The entire first column of this matrix is zero except for the bottom entry, which is -1. Thus, let $H'$ be the $(n-2) \times (n-2)$ matrix obtained by deleting the first column and bottom row of $H$, then we have that:

\begin{equation}\label{eqn:detH}
    -\det(H) = (-1)^{n-1+1} \cdot (-1) \cdot \det(H')
\end{equation}

Note that $H'$ is upper triangular and thus its determinant is the product of the elements on its diagonal:
\[det(H') = \underbrace{n^2 \times n^2 \times \cdots \times n^2}_{n-3 \text{ times}} \times (-n^3)\]
\[= -n^{2n-3}\]
Substituting this into \cref{eqn:detH} gives the desired result.

\end{proof}

\begin{thm}\label{thm:mu_n}
\[\mu_n = \frac{n^{2n}}{\det(M_n)}\]
\end{thm}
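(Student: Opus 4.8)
The plan is to read off $\mu_n$ from the linear system $M_n U_n = V_n$ of \cref{lem:mateqn} by Cramer's rule. The vector $U_n$ of expectations is a finite solution of this system, and since $\det(M_n) \neq 0$ (so that the quantity $n^{2n}/\det(M_n)$ in the statement is even well-defined), $M_n$ is invertible and Cramer's rule applies. Writing $\tilde{M}_n$ for the matrix obtained from $M_n$ by replacing its $n$-th (rightmost) column with $V_n$, Cramer's rule gives
\[ \mu_n = \frac{\det(\tilde{M}_n)}{\det(M_n)}. \]
Hence the theorem reduces entirely to the determinant identity $\det(\tilde{M}_n) = n^{2n}$.

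To evaluate $\det(\tilde{M}_n)$, I would mimic the row reduction used for $H$ in the proof of \cref{prop:detH}. Concretely, subtract row $i+1$ from row $i$ for each $i = 1, \ldots, n-2$. As in that proof, each such difference collapses the banded entries of $M_n$ into the pattern $(n^2,\ n-2n^2,\ n^2-n+1)$ (the last term being absent in the final such row, where it would land in the replaced column); and, crucially, because the two rows being subtracted carry the same entry $n$ in the replaced column, that column is annihilated in rows $1, \ldots, n-2$, leaving nonzero entries only in rows $n-1$ and $n$, namely $n$ and $n^2$. Expanding the resulting determinant along this now-sparse final column produces two $(n-1)\times(n-1)$ determinants, each consisting of the banded upper-trapezoidal block together with one modified bottom row (the all-$(-1)$ row, respectively the all-$(-n)$ row, of $M_n$), which can then be triangularized exactly as in \cref{prop:detH}.

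The main obstacle is the bookkeeping in this last step: tracking the cofactor signs, handling the two special bottom rows correctly, and verifying that the two banded determinants combine to exactly $n^{2n}$ rather than some other power of $n$. A convenient consistency check and shortcut is that the target value can be rewritten, using \cref{prop:detH}, as $n^{2n} = (-1)^{n+1} n^{3}\det(H)$; since the two minors arising here are close structural relatives of $H$, one expects each to evaluate to a power of $n$ times a small integer factor, and the final constant can be pinned down and cross-checked in a small case (for $n=3$ one finds $\det(\tilde{M}_3) = 729 = 3^6$ and $\det(M_3) = 27$, giving $\mu_3 = 27$). Once $\det(\tilde{M}_n) = n^{2n}$ is established, the theorem is immediate.
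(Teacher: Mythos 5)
Your reduction via Cramer's rule is sound and the target identity $\det(\tilde M_n)=n^{2n}$ is correct, but as written the proposal defers exactly the step that carries the content: you never evaluate the two minors produced by expanding the reduced matrix along its last column, and ``each should be a power of $n$ times a small integer factor'' is a conjecture rather than an argument (the $n=3$ check does not pin down the general constant). The paper performs what is essentially the same cofactor computation, but organized so that no new determinant work is needed beyond \cref{prop:detH}: it observes that $V_n=(n^3I_{n\times n}-nM_n)I_{n\times 1}$, i.e.\ $V_n$ equals $n^3e_1$ minus $n$ times the first column of $M_n$, whence $U_n=(n^3M_n^{-1}-nI_{n\times n})I_{n\times 1}$ and $\mu_n=n^3\cdot(M_n^{-1})_{n1}$, which by the adjugate formula is $n^3(-1)^{n+1}\det(H)/\det(M_n)$ with $H$ exactly the minor already computed. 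You can graft the same observation onto your formulation and skip the row reduction entirely: by multilinearity of the determinant in the last column,
\[\det(\tilde M_n)=n^3\det\bigl(M_n\text{ with column }n\text{ replaced by }e_1\bigr)-n\det\bigl(M_n\text{ with column }n\text{ replaced by its own first column}\bigr),\]
the second term vanishes because of the repeated column, and the first equals $n^3\cdot(-1)^{1+n}\det(H)=n^3(-1)^{1+n}(-1)^{n+1}n^{2n-3}=n^{2n}$ by \cref{prop:detH}. With some such step your proof is complete and essentially coincides with the paper's; without it, it is a plan rather than a proof. One further caution: your justification that $\det(M_n)\neq 0$ ``so that the quantity in the statement is well-defined'' is circular, since well-definedness is part of what the theorem asserts; the paper shares this implicit assumption when it writes $U_n=M_n^{-1}V_n$, and invertibility is only really settled later via $\det(M_n)=n^na_n(n)$.
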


\begin{proof}
Recall \cref{lem:mateqn}, whence we obtain $U_n = M_n^{-1} V_n$.

Note that $V_n$ can be written as:
\[V_n
=\begin{bmatrix} n\\ n\\ \cdots\\ n\\ n^2 \end{bmatrix}
=-n\left(
\underbrace{\begin{bmatrix}
n^2-1\\-1\\ \cdots\\ -1\\ -n
\end{bmatrix}}_{\text{leftmost col of }M_n}
-\begin{bmatrix}
n^2\\ 0\\ 0\\ \cdots\\ 0\\
\end{bmatrix}
\right)
\]
\[\implies V_n = -n(M_nI_{n\times1} - n^2I_{n\times1})\]
\[= (n^3I_{n\times n} - nM_n)I_{n\times1}\]

Where $I_{n\times n}, I_{n \times 1}$ are the $(n \times n)$ and $(n \times 1)$ identity matrices respectively. We may substitute this into $U_n = M_n^{-1}V_n$:

\[U_n = M_n^{-1}(n^3I_{n\times n} - nM_n)I_{n \times 1}\]
\[ = (n^3M_n^{-1} - nI_{n\times n})I_{n \times 1}\]

Now, $\mu_n$ is the bottom entry of $U_n$, by definition. Thus, from the above equation, $\mu_n$ is the bottom-left entry of $(n^3M_n^{-1} - nI_{n\times n})$, because multiplication by $I_{n \times 1}$ extracts the leftmost column. For convenience, write $BL(X)$ to mean the bottom-left entry of $X$:
\[\mu_n = BL(n^3M_n^{-1} - nI_{n\times n})\]
\[= n^3\cdot BL(M_n^{-1}) - n\cdot BL(I_{n\times n})\]
\begin{equation}\label{eqn:BL(m_inv)}
\therefore \mu_n = n^3\cdot BL(M_n^{-1})
\end{equation}

The bottom-left entry of $M_n^{-1}$ is equal to $\frac{1}{\det(M_n)}(-1)^{n+1} H_{1n}$ where $H_{1n}$ is the $(1,n)$ minor of $M_n$. $H_{1n}$ is equal to the determinant of the matrix obtained by deleting the top row and rightmost column of $M_n$; thus by \cref{prop:detH} we know that $H_{1n} = (-1)^{n+1}n^{2n-3}$. Substituting, we obtain that
\[BL(M_n^{-1}) = \frac{1}{\det(M_n)} (-1)^{n+1} (-1)^{n+1} n^{2n-3}\]
\[\implies n^3BL(M_n^{-1}) = \frac{n^{2n}}{\det(M_n)}\]
Together with \cref{eqn:BL(m_inv)}, this proves the theorem.

\end{proof}

\begin{defn} Define two sequences of polynomials as $A_2(x)=1$, $B_2(x)=0$, and for $i \geq 2$:
    \[A_{i+1}(x) = (x^2-1)A_i(x) - B_i(x)\]
    \[B_{i+1}(x) = (x^2-x+1)(A_i(x)+B_i(x))\]
\end{defn}

\begin{lem}\label{lem:sep_ab} For $i \geq 3$:
    \[\frac{A_{i+1}(x)}{x^{i+1}} = (2x-1)\frac{A_i(x)}{x^i} - (x^2-x+1)\frac{A_{i-1}(x)}{x^{i-1}}\]
    \[\frac{B_{i+1}(x)}{^{i+1}} = (2x-1)\frac{B_i(x)}{x^i} - (x^2-x+1)\frac{B_{i-1}(x)}{x^{i-1}}\]
\end{lem}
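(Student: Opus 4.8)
The plan is to recognize the coupled first-order recurrences defining $A_i$ and $B_i$ as a single vector recurrence, and then extract the scalar second-order recurrence for each component via the Cayley--Hamilton theorem. Writing the definition in the form
\[
\begin{pmatrix} A_{i+1} \\ B_{i+1} \end{pmatrix}
= T \begin{pmatrix} A_i \\ B_i \end{pmatrix},
\qquad
T = \begin{pmatrix} x^2-1 & -1 \\ x^2-x+1 & x^2-x+1 \end{pmatrix},
\]
valid for all $i \geq 2$ with base vector $(A_2,B_2)^{\mathsf T} = (1,0)^{\mathsf T}$, reduces the whole problem to understanding a single $2 \times 2$ matrix over $\Zz[x]$.

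First I would compute the two invariants of $T$: the trace $\operatorname{tr}(T) = (x^2-1)+(x^2-x+1) = x(2x-1)$ and the determinant $\det(T) = (x^2-1)(x^2-x+1) + (x^2-x+1) = x^2(x^2-x+1)$, where the factor $(x^2-x+1)$ pulls out of both determinant terms. By Cayley--Hamilton, $T^2 = x(2x-1)\,T - x^2(x^2-x+1)\,I$, and applying this identity to the vector $(A_i, B_i)^{\mathsf T}$ immediately yields the common scalar recurrence
\[
f_{i+2} = x(2x-1)f_{i+1} - x^2(x^2-x+1)f_i
\qquad (i \geq 2)
\]
satisfied by \emph{both} $f = A$ and $f = B$. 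This symmetric treatment is the advantage of the matrix viewpoint: it avoids having to eliminate $B$ from the $A$-recurrence (which is painless) and then separately eliminate $A$ from the $B$-recurrence (which otherwise requires clearing the factor $x^2-x+1$).

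The final step is purely bookkeeping: divide the scalar recurrence by $x^{i+2}$. The factor $x$ in front of $f_{i+1}$ combines with $x^{i+2}$ to leave the denominator $x^{i+1}$, and the factor $x^2$ in front of $f_i$ leaves $x^i$, so the relation becomes exactly
\[
\frac{A_{i+2}}{x^{i+2}} = (2x-1)\frac{A_{i+1}}{x^{i+1}} - (x^2-x+1)\frac{A_i}{x^i},
\]
and likewise for $B$. Re-indexing $i \mapsto i-1$ turns the condition $i \geq 2$ into $i \geq 3$ and matches the statement of the lemma verbatim.

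I do not expect a genuine obstacle here; the one point to get right is the bookkeeping of the powers of $x$, namely checking that $\operatorname{tr}(T)$ carries exactly one factor of $x$ and $\det(T)$ exactly two, since this is precisely what converts the ``raw'' recurrence for $A_i, B_i$ into the normalized recurrence for $A_i/x^i, B_i/x^i$ claimed in the lemma. If one prefers to avoid Cayley--Hamilton, the identical result follows by solving the first defining relation for $B_i = (x^2-1)A_i - A_{i+1}$ and substituting this (together with its shift) into the second relation, and dually for $B$.
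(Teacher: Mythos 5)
Your proof is correct. The paper proves the lemma by hand elimination: it multiplies the defining relation for $A_{i+1}$ by $(x^2-x+1)$, adds the relation for $B_{i+1}$ to get $(x^2-x+1)A_{i+1}+B_{i+1}=x^2(x^2-x+1)A_i$, shifts the index to express $B_i$ in terms of $A_i$ and $A_{i-1}$, and substitutes back, then remarks that the $B$ case is ``derived similarly.'' Your transfer-matrix argument reaches the same recurrence $f_{i+2}=x(2x-1)f_{i+1}-x^2(x^2-x+1)f_i$ by computing $\operatorname{tr}(T)=x(2x-1)$ and $\det(T)=x^2(x^2-x+1)$ and invoking Cayley--Hamilton; the underlying algebra is identical (the characteristic polynomial of $T$ is exactly what elimination produces), but your packaging is cleaner in one concrete respect: it delivers the recurrence for $A$ and $B$ simultaneously, so nothing analogous to the paper's ``similarly for $B$'' is left to the reader. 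Your index bookkeeping is also right: the vector recurrence holds for $i\geq 2$, so $T^2$ applies from $i=2$ onward, and reindexing gives exactly the stated range $i\geq 3$. The only thing I would add for completeness is the fallback you already mention in your last sentence, since a reader unfamiliar with Cayley--Hamilton can verify $T^2=x(2x-1)T-x^2(x^2-x+1)I$ by direct multiplication of $2\times 2$ matrices.
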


\begin{proof}
    By definition:
    \begin{equation}\label{eq:defnA}
    A_{i+1}(x) = (x^2-1)A_i(x) - B_i(x)
    \end{equation}
    \begin{equation}\label{eq:defnB}
    B_{i+1}(x) = (x^2-x+1)(A_i(x)+B_i(x))
    \end{equation}
    Multiplying \cref{eq:defnA} by $(x^2-x+1)$ and adding \cref{eq:defnB}:
    \[(x^2-x+1)A_{i+1}(x) + B_{i+1}(x) = (x^2-x+1)x^2A_i(x)\]
    Thus, shifting the index:
    \[-B_i(x) = (x^2-x+1)A_i(x) + (x^2-x+1)x^2A_{i-1}(x)\]
    Substituting this into \cref{eq:defnA} and rearranging, we obtain the required result for $A$. The corresponding result for $B$ can be derived similarly.
\end{proof}

\begin{prop}\label{prop:detM} $\det(M_n) = n^3(n-2)A_n(n) - n^3B_n(n)$\end{prop}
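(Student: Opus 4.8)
The plan is to peel off the special last row and last column of $M_n$ with a single cofactor expansion, reducing $\det(M_n)$ to two determinants of the "generic'' banded part of the matrix, and then to identify those two determinants with $A_n(n)$ and $B_n(n)$ by showing they satisfy the same coupled recurrence that defines the $A_i,B_i$.

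First I would isolate the core. Everything above the superdiagonal of $M_n$ vanishes, so its last column is $(0,\dots,0,\,n-n^2-1,\,n^2-n)^{T}$ and its last row is $(-n,\dots,-n,\,n^2-n)$. Expanding $\det(M_n)$ along the last column therefore involves only two minors. Let $C_k$ denote the top-left $k\times k$ submatrix of $M_n$ — the lower-Hessenberg matrix with diagonal $n^2-1$, superdiagonal $n-n^2-1$, and all strictly-below-diagonal entries equal to $-1$ — and let $\widetilde C_k$ be $C_k$ with its bottom row replaced by all $-1$'s. The two minors that appear are exactly $\det(C_{n-1})$ and, after factoring $n$ out of the all-$(-n)$ bottom row, $n\det(\widetilde C_{n-1})$, the cofactor signs being straightforward to pin down; this gives $\det(M_n)=(n^2-n)\det(C_{n-1})-(n-n^2-1)\,n\det(\widetilde C_{n-1})$.

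Next I would set $D_k=\det(C_k)$ and $E_k=\det(\widetilde C_k)$ and derive a coupled two-term recurrence for the pair. Because the last column of each of $C_k$ and $\widetilde C_k$ is supported only on its bottom two entries, expanding along it expresses $D_k$ and $E_k$ in terms of $D_{k-1}$ and $E_{k-1}$; crucially, the minor that strikes out the superdiagonal entry reproduces in each case a smaller $\widetilde C_{k-1}$, which is what couples the two sequences. After recording the base values (conveniently $D_0=1$, $E_0=0$), I would verify by induction that $D_k=A_{k+2}(n)$ and $E_k=-B_{k+2}(n)/(n^2-n+1)$: both hold at $k=0$, and the derived recurrence for $(D_k,E_k)$ matches the defining recurrence for $(A_i,B_i)$ once the denominator $n^2-n+1$ is cleared.

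Substituting $\det(C_{n-1})=A_{n+1}(n)$ and $\det(\widetilde C_{n-1})=-B_{n+1}(n)/(n^2-n+1)$ into the expansion collapses the factors of $n^2-n+1$ and yields $\det(M_n)=n(n-1)A_{n+1}(n)-n\,B_{n+1}(n)$. A single application of the defining recurrences $A_{n+1}=(n^2-1)A_n-B_n$ and $B_{n+1}=(n^2-n+1)(A_n+B_n)$ then rewrites this as $n^3(n-2)A_n(n)-n^3B_n(n)$, as claimed. I expect the main obstacle to be the middle step: one must recognise that no single determinant sequence closes up on its own, that the correct auxiliary object is the pair $(D_k,E_k)$ with the ``bottom row flattened to $-1$'s'' variant, and then carry the cofactor signs and the $1/(n^2-n+1)$ normalisation through the bookkeeping correctly. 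The initial reduction and the final index shift are routine polynomial algebra.
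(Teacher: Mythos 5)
Your proof is correct --- I checked the cofactor signs in the last-column expansion of $M_n$, the coupled recurrence $D_k=(n^2-1)D_{k-1}+(n^2-n+1)E_{k-1}$, $E_k=-D_{k-1}+(n^2-n+1)E_{k-1}$, the identification $D_k=A_{k+2}(n)$, $E_k=-B_{k+2}(n)/(n^2-n+1)$, and the final index shift from $n(n-1)A_{n+1}(n)-nB_{n+1}(n)$ down to $n^3(n-2)A_n(n)-n^3B_n(n)$, and all of it goes through. Your route is the mirror image of the paper's: the paper keeps the special last row and bottom-right entry inside its auxiliary matrices $F(r,k)$, parametrizes instead the \emph{top-left} corner entry $k$, and expands along the \emph{first} row, so that its coupled pair is $\bigl(f(r,n^2-1),f(r,-1)\bigr)$ and the recurrence unwinds from the top down until it hits an explicit $2\times2$ base case; you instead strip off the special last row and column in one cofactor expansion up front, and your coupled pair $\bigl(\det C_k,\det\widetilde C_k\bigr)$ is distinguished by the bottom row rather than the top-left entry, growing from the bottom-right. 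Both arguments are the same transfer-matrix idea on a lower-Hessenberg matrix; the paper's version lands exactly on $A_n,B_n$ with no index shift and avoids your $1/(n^2-n+1)$ normalisation (the two determinants it tracks are literally the coefficients in its invariant $f(n,n^2-1)=A_{n-i+2}f(i,n^2-1)+B_{n-i+2}f(i,-1)$), while yours keeps the auxiliary matrices cleaner (a single banded core plus one row-replacement variant) at the cost of ending one index too high and needing one extra application of the defining recurrences at the end. One small presentational point: your base case $k=0$ (``the empty matrix with bottom row replaced by $-1$'s'') is a convention rather than a computation; it is cleaner to anchor the induction at $k=1$ with $D_1=n^2-1=A_3(n)$ and $E_1=-1=-B_3(n)/(n^2-n+1)$, since the column expansion that produces your recurrence needs $k\ge2$ anyway.
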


\begin{proof}
Let $n \in \Nn, n \geq 3$ be fixed.

For $r,k \in \Nn, r \geq 2$, define $f(r,k)$ to be the determinant of the following $r \times r$ matrix:

\[F(r,k) := \begin{bmatrix}
k & n-n^2-1 & 0 & 0 & 0 & \cdots & 0 & 0\\
-1 & n^2-1 & n-n^2-1 & 0 & 0 & \cdots & 0 & 0\\
-1 & -1 & n^2-1 & n-n^2-1 & 0 & \cdots & 0 & 0\\
-1 & -1 & -1 & n^2-1 & n-n^2-1 & \cdots & 0 & 0\\
-1 & -1 & -1 & -1 & n^2-1 & \cdots & 0 & 0\\
\vdots & \vdots & \vdots & \vdots & \vdots & \ddots & \vdots & \vdots\\
-1 & -1 & -1 & -1 & -1 & \cdots & n^2-1 & n-n^2-1\\
-1 & -1 & -1 & -1 & -1 & \cdots & -1 & n-1\\
\end{bmatrix}\]

For this matrix $F(r,k)$, $f_{ij} = \begin{cases}
k & \text{if } i=j=1\\
n-1 & \text{if } i=j=r\\
n^2-1 & \text{if } i=j, 1<i<r\\
n-n^2-1 & \text{if } j=i+1\\
0 & \text{if } j>i+1\\
-1 & \text{else}
\end{cases}$

Now, if we take $F(n,n^2-1)$ and multiply the bottom row by $n$ (which multiplies the determinant by $n$), then the result is precisely $M_n$, by definition. Hence:
\begin{equation}\label{eqn:mf}
    \det(M_n) = n \cdot f(n,n^2-1)
\end{equation}
\emph{Claim.} For $r \geq 3, f(r,k) = k \cdot f(r-1,n^2-1) + (n^2-n+1)f(r-1,-1)$
\begin{proof}
    In the first row of $F(r,k)$, all entries after the first two are zero. If we remove the top row and first column of $F(r,k)$, the new matrix we obtain is equal to $F(r-1, n^2-1)$. If we instead remove the top row and second column of $F(r,k)$, then the new matrix equals $F(r-1,-1)$. Using this to compute $\det(F(r,k))$, we have that $\det(F(r,k)) = k \det(F(r-1,n^2-1)) - (n-n^2-1)\det(F(r-1,-1))$, which implies the claim, by definition of $f(r,k)$.
\end{proof}

Now, recall the definition of $A_i(x)$ and $B_i(x)$, evaluating each of these at $x=n$:
\begin{equation}\label{eqn:Ann}
    A_2(n)=1, B_2(n)=0
\end{equation}
\begin{equation}\label{eqn:AnnRec}
    A_{i+1}(n) = (n^2-1)A_i(n) - B_i(n)
\end{equation}
\begin{equation}\label{eqn:BnnRec}
    B_{i+1}(n) = (n^2-n+1)(A_i(n)+B_i(n))
\end{equation}
\emph{Claim.} For each $ 2 \leq i \leq n$, the following is true:
\[f(n,n^2-1) = A_{n-i+2}(n) f(r,n^2-1) + B_{n-i+2}(n) f(r,-1)\]
\begin{proof}
    Induction on $i$. When $i=n$, it is true by \cref{eqn:Ann}. Now suppose it is true for $3 \leq i\leq n$, we will prove it for $i-1$. For brevity, let $X = f(i-1,n^2-1)$, $Y = f(i-1,-1)$.
    \[f(n,n^2-1) = A_{n-i+2} f(i,n^2-1) + B_{n-i+2} f(i,-1) \text{ by inductive hypothesis}\]
    \[= A_{n-i+2} \left((n^2-1)X + (n^2-n+1)Y\right)
    + B_{n-i+2} \left(-X + (n^2-n+1)Y\right) \text{ by the above claim}\]
    \[= ((n^2-1)A_{n-i+2}-B_{n-i+2})X + ((n^2-n+1)A_{n-i+2}+(n^2-n+1)B_{n-i+2})Y\]
    \[= A_{n-(i-1)+1}X + B_{n-(i-1)+2}Y \text{ by \cref{eqn:AnnRec} and \cref{eqn:BnnRec}}\]
\end{proof}

We are now ready to prove the desired result. Note that by definition, $f(2,k) = \det\left(
\begin{bmatrix}k & n-n^2-1\\ -1 & n-1 \end{bmatrix}
\right)$, thus $f(2,k) = k(n-1) - (n^2-n-1)$. Therefore, letting $k=n^2-1$ and $k=-1$ respectively, after simplifying we deduce that:
\[f(2,n^2-1) = n^2(n-2)\]
\[f(2,-1) = -n^2\]

Hence, by letting $i=2$ in the claim immediately above, it follows that $f(n,n^2-1) = A_n(n) \cdot n^2(n-2) + B_n(n) \cdot(-n^2)$. But by \cref{eqn:mf}, we have that $\det(M_n) = n \cdot f(n,n^2-1)$, and therefore $\det(M_n) = n^3(n-2)A_n(n) - n^3B_n(n)$, as required.

\end{proof}

\begin{defn}\label{defn:a}
    Define the sequence of polynomials $(a_i(x))_{i \in \Nn}$ as:
    \[a_1(x) = x-1\]
    \[a_2(x) = x(x-2)\]
    \[a_{i+2}(x) = (2x-1)a_{i+1}(x) - (x^2-x+1)a_i(x)\]
\end{defn}

\begin{lem}\label{lem:singlerec}
If the sequence of of functions $(b_i(x))_{i \geq 2}$ satisfies:
\[b_i(x) = x^3(x-2)\frac{A_i(x)}{x^i} - x^3\frac{B_i(x)}{x^i}\]
Then for each $2 \leq i \leq n, a_i(x) = b_i(x)$.
\end{lem}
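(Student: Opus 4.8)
The plan is to show that the sequence $(b_i)$ obeys exactly the same second-order linear recurrence as $(a_i)$ in \cref{defn:a}, and then to pin the two sequences together by matching initial data. Writing $\alpha_i = A_i(x)/x^i$ and $\beta_i = B_i(x)/x^i$, \cref{lem:sep_ab} tells us that \emph{both} $\alpha_i$ and $\beta_i$ satisfy
\[c_{i+1} = (2x-1)c_i - (x^2-x+1)c_{i-1} \qquad (i \geq 3).\]
Since $b_i = x^3(x-2)\alpha_i - x^3\beta_i$ is a fixed linear combination of $\alpha_i$ and $\beta_i$ whose coefficients $x^3(x-2)$ and $-x^3$ are independent of $i$, it automatically inherits this recurrence: substituting the two recurrences into $b_{i+1} = x^3(x-2)\alpha_{i+1} - x^3\beta_{i+1}$ and collecting terms yields
\[b_{i+1} = (2x-1)b_i - (x^2-x+1)b_{i-1} \qquad (i \geq 3),\]
which, after shifting the index, is precisely the recurrence expressing $a_{i+2}$ in terms of $a_{i+1}$ and $a_i$.

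The remaining task is to anchor the induction, and here I would verify \emph{two} consecutive base cases rather than one. Using $A_2 = 1$, $B_2 = 0$ and the defining recurrences for $A$ and $B$ to compute $A_3 = x^2-1$ and $B_3 = x^2-x+1$, a direct calculation gives
\[b_2 = (x-2)A_2 - B_2 = x(x-2) = a_2,\]
\[b_3 = (x-2)A_3 - B_3 = x^3 - 3x^2 + 1 = a_3,\]
the latter agreeing with the value $(2x-1)a_2 - (x^2-x+1)a_1$ furnished by \cref{defn:a}. With $a_2 = b_2$, $a_3 = b_3$, and both sequences satisfying the common recurrence for every larger index, a routine induction delivers $a_i = b_i$ for all $i \geq 2$; in particular this covers the stated range $2 \leq i \leq n$, and the upper bound $n$ plays no genuine role, since none of the polynomials involved depends on $n$.

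The one point that requires care — and the only place the argument could silently fail — is precisely the need for two base cases. The recurrence for $(a_i)$ is valid from $i=1$ onward and so could in principle be started from $a_1, a_2$, whereas the inherited recurrence for $(b_i)$ only becomes available at $i \geq 3$, and moreover $(b_i)$ is defined only for $i \geq 2$, so there is no $b_1$ to fall back on. Consequently, checking $a_2 = b_2$ alone would not suffice to launch the induction; establishing $a_3 = b_3$ as a second anchor is what closes the gap and makes the inductive step legitimate from $i = 4$ upward.
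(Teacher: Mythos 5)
Your proof is correct and takes essentially the same route as the paper: both observe via \cref{lem:sep_ab} that $b_i$, being an $i$-independent linear combination of $A_i(x)/x^i$ and $B_i(x)/x^i$, inherits the recurrence of \cref{defn:a}, and both anchor the induction at the two base cases $i=2$ and $i=3$. (One cosmetic slip: your intermediate expression for $b_2$ should read $x(x-2)A_2 - xB_2$ rather than $(x-2)A_2 - B_2$, but the final value $x(x-2)$ is correct, so nothing is affected.)
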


\begin{proof}
    Let $(b_i(x))$ be defined as stated.
    Note that by \cref{lem:sep_ab}, the three sequences $(a_i(x))$, $\left(\frac{A_i(x)}{x^i}\right)$, $\left(\frac{B_i(x)}{x^i}\right)$ all satisfy the same second order linear recurrence, hence so does any linear combination of the latter two. But $(b_i(x))$ is such a linear combination, thus we only need to show that $(a_i(x))$ and $(b_i(x))$ are equal for $i=2$ and $i=3$ (because then the following terms will be equal by induction).

    When $i=2$, $a_i(x) = a_2(x) = x(x-2)$ by definition, and similarly, $b_i(x) = x^3(x-2)\frac{A_2}{x^2} - x^3\frac{B_2}{x^2} = x(x-2)$. Hence indeed, $a_2(x) = b_2(x)$.

    Similarly, when $i=3$, it can be verified that $a_3(x) = x^3-3x^2+1 = b_3(x)$.
\end{proof}

\begin{thm}\label{thm:disc_soln}
$E_3(n) = \frac{n^n}{a_n(n)}$.
\end{thm}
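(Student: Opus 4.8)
The plan is to chain together the three main results already established---\cref{thm:mu_n}, \cref{prop:detM}, and \cref{lem:singlerec}---together with the observation that $E_3(n) = \mu_n$. The entire statement should then follow by substitution, with no new combinatorial or algebraic content required; all of the work has been done in the preceding lemmas.

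First I would recall that $\mu_n = E_3(n)$, since having just rolled $n$ is identical to the start of the process. Hence it suffices to compute $\mu_n$. By \cref{thm:mu_n} we have $\mu_n = n^{2n}/\det(M_n)$, which reduces the problem to expressing $\det(M_n)$ in terms of $a_n(n)$. To make this connection I would apply \cref{lem:singlerec} to the specific sequence
\[b_i(x) := x^3(x-2)\frac{A_i(x)}{x^i} - x^3\frac{B_i(x)}{x^i},\]
which is exactly the sequence that lemma considers. It therefore yields $a_i(x) = b_i(x)$ for all $2 \leq i \leq n$; taking $i = n$ and evaluating at $x = n$ gives
\[a_n(n) = b_n(n) = \frac{n^3(n-2)A_n(n) - n^3 B_n(n)}{n^n}.\]
By \cref{prop:detM}, the numerator here is precisely $\det(M_n)$, so $a_n(n) = \det(M_n)/n^n$, i.e.\ $\det(M_n) = n^n a_n(n)$.

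Finally, substituting this into the expression for $\mu_n$ gives
\[E_3(n) = \mu_n = \frac{n^{2n}}{\det(M_n)} = \frac{n^{2n}}{n^n a_n(n)} = \frac{n^n}{a_n(n)},\]
as required. The only point demanding any care is the bookkeeping of the $x^i$ denominators when evaluating $b_n$ at $x=n$: one must check that the factor $n^n$ emerging from the $x^n$ denominator correctly cancels one power of $n^n$ from $n^{2n}$, leaving $n^n$ in the numerator. This is routine, and there is no substantive obstacle---the difficulty of the overall argument lives entirely in the determinant evaluation of \cref{prop:detH}/\cref{prop:detM} and the recurrence identifications of \cref{lem:sep_ab} and \cref{lem:singlerec}, which this theorem simply assembles.
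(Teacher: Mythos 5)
Your proposal is correct and follows exactly the paper's own argument: invoke \cref{lem:singlerec} to identify $a_n(n)$ with $b_n(n)$, use \cref{prop:detM} to conclude $\det(M_n)=n^n a_n(n)$, and substitute into $\mu_n = n^{2n}/\det(M_n)$ from \cref{thm:mu_n}, finishing with the observation that $E_3(n)=\mu_n$. No differences worth noting.
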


\begin{proof}
    Consider the sequence of functions $b_i(x) = x^3(x-2)\frac{A_i(x)}{x^i} - x^3\frac{B_i(x)}{x^i}$; by \cref{lem:singlerec}, the sequences $(a_i(x))$ and $(b_i(x))$ are equal for $2 \leq i \leq n$. Now recall \cref{prop:detM} and divide both sides by $n^n$:
    \[\frac{\det(M_n)}{n^n} = n^3(n-2)\frac{A_n}{n^n} - n^3\frac{B_n}{n^n}\]
    \[ = b_n(n) \text{ by definition}\]
    \[ = a_n(n) \text{ by \cref{lem:singlerec}}\]
\begin{equation}\label{eqn:detM=a}
    \therefore \det(M_n) = n^n a_n(n)
\end{equation}

Recall \cref{thm:mu_n}, which states that $\mu_n = \frac{n^{2n}}{\det(M_n)}$. In this we may substitute \cref{eqn:detM=a}:
\[\mu_n = \frac{n^{2n}}{n^n a_n(n)} = \frac{n^n}{a_n(n)}\]

Finally, note that $E_3(n) = \mu_n$ because by definition, $\mu_n$ is the expected number of further rolls given that the previous roll was $n$. But since the next roll must decrease, this situation is identical to the start. Hence $E_3(n) = \mu_n = \frac{n^n}{a_n(n)}$, as required.
\end{proof}

\begin{rem}
    The first few values of $a_n(n)$ are:
    \begin{center}
    \begin{tabular}{ c|c|c|c|c|c|c|c|c } 
    $n$ & 3 & 4 & 5 & 6 & 7 & 8 & 9 & 10\\ 
    \hline
    $a_n(n)$ & 1 & 15 & 225 & 3781 & 72078 & 1550016 & 37259191 & 991980099
    \end{tabular}
    \end{center}
\end{rem}

\begin{lem}\label{lem:closed} For a fixed value of $n$, the closed form for $a_i(n)$ is:
\[a_i(n) = \left(\frac{n-1}{2} - \frac{n+1}{\sqrt{-3}}\right)\left(n - \frac{1}{2} + \frac{\sqrt{-3}}{2}\right)^{i-1}
+ \left(\frac{n-1}{2} + \frac{n+1}{\sqrt{-3}}\right)\left(n - \frac{1}{2} - \frac{\sqrt{-3}}{2}\right)^{i-1}\]
\end{lem}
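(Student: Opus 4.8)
The plan is to treat $(a_i(n))_{i}$, for a \emph{fixed} value of $n$, as a sequence satisfying a second-order linear recurrence with constant coefficients, and to solve it by the standard characteristic-root method. Evaluating the recurrence of \cref{defn:a} at $x=n$ gives $a_{i+2}(n) = (2n-1)a_{i+1}(n) - (n^2-n+1)a_i(n)$, so the associated characteristic polynomial is $t^2 - (2n-1)t + (n^2-n+1)$. Its discriminant is $(2n-1)^2 - 4(n^2-n+1) = -3$, independent of $n$, so the two roots are always distinct and equal $t_\pm = n - \tfrac12 \pm \tfrac{\sqrt{-3}}{2}$, which are exactly the bases appearing in the claimed formula.

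Because $t_+ \neq t_-$, the general solution of the recurrence is $C_+ t_+^{\,i-1} + C_- t_-^{\,i-1}$ for constants $C_\pm$ not depending on $i$. The key structural observation is that \emph{any} such linear combination automatically satisfies the recurrence, since each of $t_+^{\,i-1}$ and $t_-^{\,i-1}$ does. Consequently, to prove the lemma I would not need to manipulate the recurrence at all for $i \geq 3$: it suffices to verify that the claimed expression agrees with $a_i(n)$ at the two initial indices $i=1$ and $i=2$, after which equality for every larger $i$ follows by induction from the shared recurrence.

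The two base cases are therefore the entire content of the argument. At $i=1$ both exponents vanish and the expression collapses to $C_+ + C_-$; the $\sqrt{-3}$ parts of the two coefficients cancel, and this should reduce to $a_1(n) = n-1$. At $i=2$ the expression is $C_+ t_+ + C_- t_-$, which I would expand and simplify using $(\sqrt{-3})^2 = -3$, checking that it reduces to $a_2(n) = n(n-2)$. Equivalently, one can recast both checks as solving the linear system $C_+ + C_- = n-1$, $\;C_+ t_+ + C_- t_- = n(n-2)$ for $C_\pm$ and confirming that the solution matches the stated coefficients.

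I expect the main obstacle to be purely computational and to lie in the $i=2$ verification: one must combine the rational and the $\sqrt{-3}$ parts carefully, using the convention $1/\sqrt{-3} = -\sqrt{-3}/3$, so that the cross terms carrying $\sqrt{-3}$ cancel and the surviving rational part lands precisely on $n(n-2)$. This is exactly where a stray factor or a sign in the coefficient of $\sqrt{-3}$ could slip through unnoticed, so before committing to the closed form I would sanity-check the constants numerically at a small value such as $n=3$, where $a_1=2$, $a_2=3$, and $a_3 = n^3 - 3n^2 + 1 = 1$ (consistent with the table in the remark above).
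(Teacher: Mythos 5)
Your method is exactly the paper's: form the characteristic polynomial $t^2-(2n-1)t+(n^2-n+1)$, note its discriminant is $-3$ so the roots $n-\tfrac12\pm\tfrac{\sqrt{-3}}{2}$ are distinct, observe that any linear combination of their powers satisfies the recurrence, and pin down the two coefficients from $a_1(n)=n-1$ and $a_2(n)=n(n-2)$. The paper's proof is no more detailed than yours; it likewise dismisses the base cases with ``it can be verified.''

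The gap is that you (like the paper) never actually carry out the $i=2$ verification, and that is precisely where the statement fails as written. With $C_\pm=\frac{n-1}{2}\mp\frac{n+1}{\sqrt{-3}}$ and $\lambda_\pm=\left(n-\tfrac12\right)\pm\tfrac{\sqrt{-3}}{2}$, the mixed terms cancel and one gets
\[
C_+\lambda_+ + C_-\lambda_- \;=\; (n-1)\left(n-\tfrac12\right) \;-\; 2\cdot\frac{n+1}{\sqrt{-3}}\cdot\frac{\sqrt{-3}}{2} \;=\; n^2-\tfrac{3n}{2}+\tfrac12-(n+1) \;=\; n^2-\tfrac{5n}{2}-\tfrac12,
\]
which is not $a_2(n)=n^2-2n$. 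Your own proposed sanity check at $n=3$ exposes this immediately: the displayed formula yields $1$ at $i=2$, whereas $a_2(3)=3$. Solving the $2\times 2$ system correctly gives $C_\pm=\frac{n-1}{2}\mp\frac{n+1}{2\sqrt{-3}}$; that is, the $\sqrt{-3}$ part of the stated coefficients is too large by a factor of $2$. (The corrected constants are the ones implicitly used downstream: the denominator in \cref{cor:continuousform} carries $\frac{1}{\sqrt3}(x+1)\sin\alpha$, which corresponds to $\frac{x+1}{2\sqrt{-3}}$, not $\frac{x+1}{\sqrt{-3}}$.) So the defect in your proposal is not the strategy but the deferred computation: once performed, it shows the lemma's coefficient must be corrected before the induction can close.
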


\begin{proof}
    The characteristic equation of the $a_i(n)$ is:
    \[\lambda^2 - (2n-1)\lambda + (n^2-n+1) = 0\]
    This has roots:
    \[\lambda_1, \lambda_2 = n-\frac{1}{2} \pm \frac{\sqrt{-3}}{2}\]
    And thus we may write $a_i(n) = X\lambda_1^{i-1} + Y\lambda_1^{i-1}$ where $X$ and $Y$ are constants to be determined. It can be verified that the first two terms of the supposed closed form match with $a_1(n)$ and $a_2(n)$, thus this determines $X$ and $Y$ to be the values given in the closed form, and we are done (by induction).
\end{proof}

\begin{prop}\label{prop:closedform}
\[E_3(n) = \frac{n^n}{ \left(\frac{n-1}{2} - \frac{n+1}{\sqrt{-3}}\right)\left(n - \frac{1}{2} + \frac{\sqrt{-3}}{2}\right)^{n-1}
+ \left(\frac{n-1}{2} + \frac{n+1}{\sqrt{-3}}\right)\left(n - \frac{1}{2} - \frac{\sqrt{-3}}{2}\right)^{n-1} }\]
\end{prop}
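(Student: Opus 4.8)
The plan is to obtain this formula as an immediate consequence of the two preceding results: \cref{thm:disc_soln}, which asserts $E_3(n) = \frac{n^n}{a_n(n)}$, and \cref{lem:closed}, which furnishes a closed form for $a_i(n)$ at fixed $n$. Since the denominator of $E_3(n)$ is exactly $a_n(n)$, the entire task reduces to evaluating the closed form of \cref{lem:closed} at the index $i = n$ and substituting it into the quotient from \cref{thm:disc_soln}.

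Concretely, I would first invoke \cref{thm:disc_soln} to record $E_3(n) = n^n / a_n(n)$. Next I would specialise \cref{lem:closed} to $i = n$, replacing each occurrence of $i-1$ by $n-1$, which gives
\[a_n(n) = \left(\frac{n-1}{2} - \frac{n+1}{\sqrt{-3}}\right)\left(n - \frac{1}{2} + \frac{\sqrt{-3}}{2}\right)^{n-1} + \left(\frac{n-1}{2} + \frac{n+1}{\sqrt{-3}}\right)\left(n - \frac{1}{2} - \frac{\sqrt{-3}}{2}\right)^{n-1}.\]
Inserting this denominator into the formula from \cref{thm:disc_soln} reproduces the stated expression verbatim, completing the argument.

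There is essentially no obstacle here, as the proposition is a direct composition of results already established; the work was done in proving \cref{thm:disc_soln} and \cref{lem:closed}. The one point I would flag for the reader is that the two summands forming $a_n(n)$ are complex conjugates: the coefficients $\frac{n-1}{2} \mp \frac{n+1}{\sqrt{-3}}$ form a conjugate pair (since $1/\sqrt{-3} = -i/\sqrt{3}$), as do the bases $n - \frac{1}{2} \pm \frac{\sqrt{-3}}{2}$. Hence $a_n(n)$ equals twice the real part of a single complex quantity and is therefore a genuine real (indeed positive, for $n \geq 3$) number, so the right-hand side is real despite the manifest appearance of $\sqrt{-3}$. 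I would include this remark to confirm that no spurious imaginary part survives in the final closed form.
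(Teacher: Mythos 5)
Your proposal is correct and matches the paper's proof, which simply cites \cref{thm:disc_soln} and \cref{lem:closed} and calls the result immediate. The added remark that the two summands are complex conjugates (so the denominator is real) is a reasonable clarification but not part of the paper's argument.
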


\begin{proof}
    Immediate, due to \cref{thm:disc_soln} and \cref{lem:closed}.
\end{proof}

\begin{rem}
    The closed form shows that $E_3(n)$ can be computed in logarithmic time, using repeated squaring.
\end{rem}

\begin{cor}[Continuous form of $E_3(x)$]\label{cor:continuousform} 
   \[E_3(x) = \frac{x^x}{\left(x^{2}-x+1\right)^{\frac{x-1}{2}}\left(\left(x-1\right)\cos\alpha-\frac{1}{\sqrt{3}}\left(x+1\right)\sin\alpha\right)}\]
   Where $\alpha = (x-1)\arctan\left(\frac{\sqrt{3}}{2x-1}\right)$.
\end{cor}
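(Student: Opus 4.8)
The plan is to start from the complex closed form of \cref{prop:closedform} and convert it into an expression in real elementary functions by exploiting the fact that its two summands are complex conjugates. Writing $E_3(x) = x^x/a_x(x)$, where $a_x(x)$ denotes the closed form of \cref{lem:closed} with \emph{both} the index $i$ and the evaluation point taken equal to $x$, the entire task reduces to rewriting the denominator $a_x(x)$ in the claimed trigonometric form; the numerator $x^x$ is already real.

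First I would substitute $\sqrt{-3} = i\sqrt{3}$ throughout, so that the two roots become the conjugate pair $\lambda_1 = (x - \tfrac12) + \tfrac{\sqrt3}{2}i$ and $\lambda_2 = \overline{\lambda_1}$, and the two coefficients become a conjugate pair $X$ and $\overline{X}$ with $\mathrm{Re}(X) = \tfrac{x-1}{2}$. Consequently $a_x(x) = X\lambda_1^{x-1} + \overline{X}\,\overline{\lambda_1}^{\,x-1} = 2\,\mathrm{Re}\!\left(X\lambda_1^{x-1}\right)$. Next I would pass to polar form: a direct computation gives $|\lambda_1| = \sqrt{(x-\tfrac12)^2 + \tfrac34} = \sqrt{x^2 - x + 1}$ and $\arg\lambda_1 = \arctan\!\frac{\sqrt3/2}{x-1/2} = \arctan\!\frac{\sqrt3}{2x-1}$, so that by De Moivre $\lambda_1^{x-1} = (x^2-x+1)^{(x-1)/2}(\cos\alpha + i\sin\alpha)$ with $\alpha = (x-1)\arctan\!\frac{\sqrt3}{2x-1}$, exactly the $\alpha$ in the statement. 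Expanding the product $X\lambda_1^{x-1}$, taking its real part, and doubling then collects a $\cos\alpha$ term and a $\sin\alpha$ term whose coefficients are precisely $(x-1)$ and $-\tfrac{1}{\sqrt3}(x+1)$; dividing $x^x$ by the result yields the corollary.

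The main obstacle is the passage from the integer parameter $n$ to a continuous real variable $x$, since the exponent $x-1$ is now in general non-integral. I would handle this by defining $\lambda_1^{x-1}$ through the principal polar representation $r^{x-1}e^{i(x-1)\theta}$ with $\theta = \arg\lambda_1 \in (0, \tfrac{\pi}{2})$; this branch is unambiguous because $x - \tfrac12 > 0$ for every $x$ in the relevant range, which is exactly what makes $\arctan\frac{\sqrt3}{2x-1}$ the correct argument. One then checks that the conjugate symmetry $\overline{\lambda_1}^{\,x-1} = \overline{\lambda_1^{x-1}}$ survives for real exponents under this convention, so that the two summands genuinely remain conjugate and the imaginary parts cancel, leaving a real-valued expression. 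The remaining work is the careful real/imaginary bookkeeping in the expansion of $X\lambda_1^{x-1}$; this is the step where the exact coefficient of $\sin\alpha$ must be tracked, and it can be sanity-checked against a small value such as $a_4(4) = 15$ before replacing $n$ by $x$.
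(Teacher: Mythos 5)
Your proposal is correct and follows essentially the same route as the paper's (very terse) proof: put the conjugate roots in polar form with modulus $\sqrt{x^2-x+1}$ and argument $\arctan\frac{\sqrt3}{2x-1}$, apply De Moivre with the principal branch for the real exponent, and take twice the real part of one summand. One bookkeeping caveat at exactly the step you flag: taking the coefficient $X$ literally from \cref{prop:closedform} gives $\mathrm{Im}(X)=\frac{x+1}{\sqrt3}$ and hence a sine coefficient of $-\frac{2}{\sqrt3}(x+1)$ after doubling, not the stated $-\frac{1}{\sqrt3}(x+1)$; the sanity check you propose (e.g.\ against $a_2(n)=n(n-2)$ or $a_4(4)=15$) shows that \cref{lem:closed} is off by a factor of $2$ in its $\frac{n+1}{\sqrt{-3}}$ terms (they should read $\frac{n+1}{2\sqrt{-3}}$), and with that correction your expansion lands precisely on the corollary, which is itself correct.
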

\begin{proof}
    In \cref{prop:closedform}, the complex roots $x-\frac{1}{2} \pm \frac{\sqrt{-3}}{2}$ can be written in exponential form. Then the expression can be manipulated into the result, using the exponential definition of $\sin$ and $\cos$.
\end{proof}

\begin{rem}
    $E_3(x)$ appears to be strictly decreasing, which would make sense since when more sides of the die are available, it should be easier to roll 3 increasing values. This could probably be proven by splitting $\frac{1}{E_3(x)}$ into a product and/or sum of simpler constituent functions, and showing that those constituent functions are increasing by differentiation.
\end{rem}

\begin{lem}\label{lem:an_mod_nnn}
Taking $a_n(n)$ mod $n^3$, we have:
\[a_n(n) \equiv \begin{cases}
    -\frac{n(n-1)}{2}n^2 + 1 & \text{ if } n \equiv 0 \text{ (mod 3)}\\
    n^2-1 & \text{ if } n \equiv 1 \text{ (mod 3)}\\
    n^2\frac{(n+1)(n-2)}{2} & \text{ else}
\end{cases}\]
\end{lem}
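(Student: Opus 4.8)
The plan is to use the closed form from \cref{lem:closed}, but first to rewrite the two characteristic roots in a shape adapted to reduction modulo $n^3$. Writing $\omega = e^{2\pi i/3}$ for a primitive cube root of unity, one has $n-\tfrac12\pm\tfrac{\sqrt{-3}}{2}=n+\omega$ and $n+\omega^2$, so that $a_i(n) = X(n+\omega)^{i-1} + Y(n+\omega^2)^{i-1}$ for the two conjugate constants $X,Y$ of \cref{lem:closed}. The key object is the ``angular part'' $g(j) := X\omega^j + Y\omega^{2j}$. Since $Y=\bar X$ and $\omega^2=\bar\omega$ it is real; since $\omega^3=1$ it has period $3$; and, $\omega,\omega^2$ being the roots of $\lambda^2+\lambda+1$, it satisfies $g(j+2) = -g(j+1)-g(j)$. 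Rather than evaluate $X,Y$ explicitly I would pin $g$ down from initial data: $g(0)=X+Y=a_1(n)=n-1$, and from $a_2(n)=n\,g(0)+g(1)$ one gets $g(1)=a_2(n)-n(n-1)=-n$, whence $g(2)=-g(0)-g(1)=1$.

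Next I would expand $a_n(n)$ binomially. Collecting the two sums term by term gives
\[a_n(n) = \sum_{k=0}^{n-1}\binom{n-1}{k}n^k\,g(n-1-k).\]
Every term with $k\ge 3$ carries a factor $n^k$ and hence vanishes modulo $n^3$ (the $g$-values being bounded integers), so
\[a_n(n) \equiv g(n-1) + n(n-1)\,g(n-2) + n^2\binom{n-1}{2}g(n-3) \pmod{n^3}.\]
Because $g$ has period $3$ we have $g(n-3)=g(n)$, and the three arguments $n-1,\,n-2,\,n$ run through all residues modulo $3$; this is precisely what produces the three cases of the statement.

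Finally I would split on $n \bmod 3$ and substitute $g(0)=n-1$, $g(1)=-n$, $g(2)=1$. For example, when $n\equiv 1$ the arguments become $g(0),g(2),g(1)$, the quadratic term equals $-n^3\binom{n-1}{2}\equiv 0$, and we are left with $a_n(n)\equiv (n-1)+n(n-1)=n^2-1$; the cases $n\equiv 0$ and $n\equiv 2$ are handled identically. The step I expect to require most care is the bookkeeping in the quadratic term: $n^2\binom{n-1}{2} = \tfrac{(n-1)(n-2)}{2}n^2$ must be reduced mod $n^3$ through a division by $2$ whose effect depends on the parity of $n$, and the resulting expressions match the stated representatives $-\tfrac{n(n-1)}{2}n^2+1$ and $\tfrac{(n+1)(n-2)}{2}n^2$ only after discarding further multiples of $n^3$. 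So the concluding task is to check, case by case, that the difference between my simplified expression and the claimed one is an integer multiple of $n^3$ (for $n\equiv 0$, say, the gap is $\tfrac{n^3(n-1)(n-2)}{2}$, which is manifestly $\equiv 0$), and this verification is where I would be most careful.
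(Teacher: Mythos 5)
Your proof is correct, but it takes a genuinely different route from the paper. The paper never touches the closed form here: it reduces the defining recurrence $a_{i+2}=(2x-1)a_{i+1}-(x^2-x+1)a_i$ modulo $n^3$, writes $a_i(n)\equiv X_in^2+Y_in+Z_i$, derives coupled recurrences for $(X_i,Y_i,Z_i)$, and then verifies guessed closed forms (each involving a residue mod $3$) by induction and case analysis. You instead start from \cref{lem:closed}, recognize the roots as $n+\omega$ and $n+\omega^2$ with $\omega$ a primitive cube root of unity, and get $a_n(n)=\sum_{k}\binom{n-1}{k}n^k g(n-1-k)$ by the binomial theorem, with $g$ integer-valued, period $3$, and pinned down by $g(0)=n-1$, $g(1)=-n$, $g(2)=1$ (all of which check out against $a_1,a_2$). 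Truncating at $k\le 2$ and splitting on $n\bmod 3$ then gives the three cases; I verified the two nontrivial ones, including your stated gap $\tfrac{n^3(n-1)(n-2)}{2}$ for $n\equiv 0$, which is indeed an integer multiple of $n^3$ since $(n-1)(n-2)$ is even. Your approach buys a conceptual explanation of where the period $3$ comes from ($\omega^3=1$) and replaces the paper's ``case bashing and induction'' on three coupled sequences with a single binomial expansion; its only cost is that it depends on \cref{lem:closed}, whereas the paper's argument is self-contained from the recurrence. One small slip: the terms with $k\ge 3$ do not vanish because the $g$-values are \emph{bounded} (they are not --- $g(0)$ and $g(1)$ grow with $n$), but simply because they are integers, so each such term is an integer multiple of $n^3$; the conclusion is unaffected.
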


\begin{proof}
    Write $a_i(n)$ mod $n^3$ as $X_i n^2 + Y_i n + Z_i$, a polynomial in $\Zz[n]_{n^3}$. Then, substituting this into the recurrence relation that defines the $a_i$, it can be verified that:
    \[X_{i+1} = 2Y_i - X_i - X_{i-1} - Z_{i-1} + Y_{i-1}\]
    \[Y_{i+1} = 2Z_i - Y_i - Y_{i-1} + Z_{i-1}\]
    \[Z_{i+1} = -Z_i - Z_{i-1}\]
    With the base cases being $(X_1, Y_1, Z_1) = (0,1,-1), (X_2, Y_2, Z_2) = (1,-2,0)$.
    The closed forms for $X_i,Y_i,Z_i$ can then be proven by case bashing and induction. Writing mod as a binary operator, they are:
    \[Z_i = ((i-1) \text{\;mod\;} 3) - 1\]
    \[Y_i = i(((i+1) \text{\;mod\;} 3)-1)\]
    \[X_i = \frac{i(i-1)}{2}((i \text{\;mod\;} 3)-1)\]
    Letting $i=n$ and considering each case of $n \mod 3$, the result can be verified.
\end{proof}

\begin{lem}\label{lem:nu2(an)}
     If $n \equiv 2$ (mod 12), then:
     \[\nu_2(a_n(n)) = 3 + \nu_2\left(\Bigl\lfloor\frac{n}{12}\Bigr\rfloor\right)\]
     Where $\nu_2(x)$ denotes the exponent of the highest power of 2 dividing $x$.
\end{lem}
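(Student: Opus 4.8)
The plan is to convert the diagonal quantity $a_n(n)$ into an explicit \emph{integer} binomial sum and then read off $\nu_2$ directly; the recurrence itself is awkward because $n$ appears simultaneously as index and argument, so a closed form is essential.

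\textbf{Step 1 (a clean Eisenstein closed form).} Starting from the characteristic roots of the recurrence in \cref{defn:a}, namely $n-\tfrac12\pm\tfrac{\sqrt{-3}}2$, I would rewrite them as $\lambda_1=n+\omega$ and $\lambda_2=n+\omega^2$, where $\omega=e^{2\pi i/3}$ and $\sqrt{-3}=\omega-\omega^2$. Matching the initial values $a_1(n)=n-1$ and $a_2(n)=n(n-2)$ pins the two constants in the general solution and collapses it to
\[a_i(n)=\frac{\omega\lambda_1^{\,i}-\omega^2\lambda_2^{\,i}}{\sqrt{-3}},\qquad\text{so}\qquad a_n(n)=\frac{\omega(n+\omega)^n-\omega^2(n+\omega^2)^n}{\sqrt{-3}}.\]
(Both initial conditions are checked by a one-line computation, and the expression manifestly satisfies the recurrence, so it agrees with \cref{lem:closed}.)

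\textbf{Step 2 (reduction to a binomial sum).} Writing $n+\omega=\omega(1+n\omega^2)$ and $n+\omega^2=\omega^2(1+n\omega)$, and using the hypothesis $n\equiv 2\pmod 3$ (so $\omega^{n+1}=\omega^{2n+2}=1$), the prefactors cancel and I would obtain $a_n(n)=\big((1+n\omega^2)^n-(1+n\omega)^n\big)/\sqrt{-3}$. Expanding both binomials and using $\omega^{2k}-\omega^k=\epsilon_k\sqrt{-3}$, where $\epsilon_k\in\{0,-1,+1\}$ according to $k\equiv 0,1,2\pmod 3$, the factor $\sqrt{-3}$ cancels and leaves the integer identity
\[a_n(n)=\sum_{k=1}^n \epsilon_k\binom nk n^k.\]
(This matches the table, e.g.\ $n=5$ gives $-25+250-3125+3125=225$.)

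\textbf{Step 3 and the main obstacle.} Put $t=\nu_2(m)$ with $m=\lfloor n/12\rfloor$; then $n\equiv 2\pmod{12}$ gives $\nu_2(n)=1$ and $n-2=12m$, so $\nu_2(n-2)=2+t$. The $k=1,2$ terms combine to $-n^2+\tfrac{n^3(n-1)}2=\tfrac{n^2(n-2)(n+1)}2$, of valuation $2+(2+t)-1=3+t$, which is exactly the claimed value; so everything reduces to showing that every term with $k\ge 3$ has valuation strictly larger than $3+t$. This uniform tail bound is the crux, since for small $k$ the valuation of $\binom nk$ itself grows with $t$ through the factor $n-2$. The plan is to split into two ranges. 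For $3\le k\le 2^{2+t}+2$, in the numerator $\prod_{j=0}^{k-1}(n-j)$ only the factor $n-2$ carries large valuation $2+t$, while every other factor satisfies $\nu_2(n-j)=\nu_2(j-2)$ (because $|j-2|<2^{2+t}$ in this range); bookkeeping then yields $\nu_2\binom nk=3+t-\nu_2\!\big(k(k-1)(k-2)\big)$, so the $k$-th term has valuation $3+t+\big(k-\nu_2(k(k-1)(k-2))\big)$, which exceeds $3+t$ precisely because $k>\nu_2\!\big(k(k-1)(k-2)\big)$ for all $k\ge 3$ (a short case check on the parity of $k$). For $k>2^{2+t}+2$ the factor $n^k$ alone forces valuation $\ge k>3+t$. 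Hence the $k=1,2$ contribution dominates and $\nu_2(a_n(n))=3+t=3+\nu_2(\lfloor n/12\rfloor)$, as required.
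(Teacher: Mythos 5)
Your proof is correct, but it takes a genuinely different route from the paper. The paper argues by induction along the subsequence $a_2(n), a_{14}(n), a_{26}(n), \dots, a_n(n)$: it computes $\nu_2(a_2(n))=3+\nu_2(\lfloor n/12\rfloor)$ directly, establishes $\nu_2(a_{14}(n))=\nu_2(a_2(n))$ via an explicit (and admittedly tedious) factorization $a_{14}(n)=n(n-2)\times\text{Odd}\times\text{Odd}$, and then propagates equality of valuations using the fact that this subsequence satisfies its own second-order recurrence $a_{12(i+2)+2}=S\,a_{12(i+1)+2}-P\,a_{12i+2}$ with $S$ even and $P$ odd. You instead derive the Eisenstein closed form $a_n(n)=\bigl((1+n\omega^2)^n-(1+n\omega)^n\bigr)/\sqrt{-3}=\sum_{k\ge 1}\epsilon_k\binom{n}{k}n^k$ and read off $\nu_2$ term by term: the $k=1,2$ terms contribute exactly $3+t$, and your computation $\nu_2\binom{n}{k}=3+t-\nu_2\bigl(k(k-1)(k-2)\bigr)$ for $3\le k\le 2^{2+t}+2$ (valid because only the factor $n-2$ in the numerator carries large valuation), together with the trivial bound $\nu_2(n^k)=k$ for larger $k$, shows every tail term has strictly larger valuation; the inequality $k>\nu_2\bigl(k(k-1)(k-2)\bigr)$ follows at once from $\nu_2(k(k-1)(k-2))\le\nu_2(k!)<k$. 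I checked each step, including the identity $n+\omega=\omega(1+n\omega^2)$, the cancellation of $\omega^{n+1}$ using $n\equiv 2\pmod 3$, and the combination $-n^2+\tfrac{n^3(n-1)}{2}=\tfrac{n^2(n-2)(n+1)}{2}$; all are sound. Your approach buys a self-contained, fully explicit argument that avoids the degree-$14$ polynomial factorization and the two-step base case, and it would adapt to other primes or residue classes; the paper's approach is shorter to state but leans on a computation that must simply be verified. (Incidentally, your closed form also reveals a typo in \cref{lem:closed}: the coefficients there should read $\frac{n\pm 1}{2\sqrt{-3}}$ rather than $\frac{n+1}{\sqrt{-3}}$ in order to match $a_2(n)=n(n-2)$.)
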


\begin{proof}
    It is enough to show that $\nu_2(a_2(n)) = \nu_2(a_{14(n)}) = 3 + \nu_2(\lfloor\frac{n}{12}\rfloor)$ and that if
    $\nu_2(a_{12i+2}(n)) = \nu_2(a_{12(i+1)+2}(n))$, then $\nu_2(a_{12(i+1)+2}(n)) = \nu_2(a_{12(i+2)+2}(n))$.
    This is because then we would have $3 + \nu_2(\lfloor\frac{n}{12}\rfloor) = \nu_2(a_2(n)) = \nu_2(a_{14}(n)) = \nu_2(a_{26}(n)) = \cdots = \nu_2(a_n(n))$, and so we would be done by induction.

    For convenience, write $n = 12k + 2$, then $\lfloor \frac{n}{12}\rfloor = k$.
    
    We first prove the base case.
    
    $\nu_2(a_2) = \nu_2(n(n-2))$ by definition, which equals $\nu_2(12k(12k+2)) = \nu_2(12) + \nu_2(k) + \nu_2(12k+2) = 3 + \nu_2(k)$.
    
    Now, by \cref{lem:closed}, it can be (tediously) verified that:
    \[a_{14}(n) = -14n + 91n^2 - 1001n^4 + 2002n^5 - 3432n^7 + 3003n^8 - 1001n^{10} + 364n^{11} - 14n^{13} + n^{14}
    \]
    Interestingly, this can be factorized to:
    \[a_{14}(n) =  n(n-2)(7-21n+35n^3-21n^4+n^6)(1-3n-12n^2+29n^3-3n^4-12n^5+n^6) 
    \]
    Therefore, since $a_2(n) = n(n-2)$ by definition, and $n$ is even by assumption, it follows that $a_{14}(n) = a_2(n) \times \text{Odd} \times \text{Odd}$. And so, $\nu_2(a_{14}(n)) = \nu_2(a_2(n) \times \text{Odd} \times \text{Odd}) = \nu_2(a_2(n))$, as required.

    Now we show the inductive step.

    Note that $a_2(n), a_{14}(n), a_{26}(n), \cdots, a_n(n)$ are evenly spaced terms of a second order linear recurrence, and thus these terms satisfy their own second order linear recurrence. Recall that by \cref{lem:closed}, the roots of the characteristic equation of the $a_i(n)$ are:
    \[\lambda_1, \lambda_2 = n-\frac{1}{2} \pm \frac{\sqrt{-3}}{2}\]
    Thus, $a_2(n), a_{14}(n), a_{26}(n), \cdots, a_n(n)$ satisfy the recurrence $a_{12(i+2)+2}(n) = Sa_{12(i+1)+2}(n) - Pa_{12i+2}(n)$, where:
    \[S = \lambda_1^{12} + \lambda_2^{12}\]
    \[P = \lambda_1^{12}\lambda_2^{12}\]

    $S$ and $P$ are polynomials in $n$ with integer coefficients. The constant term of $P$ is equal to the constant term of $(\underbrace{n^2-n-1}_{\lambda_1\lambda_2})^{12}$, which is 1. The constant term of $S$ is equal to $\left(-\frac{1}{2}+\frac{\sqrt{-3}}{2}\right)^{12} + \left(-\frac{1}{2}-\frac{\sqrt{-3}}{2}\right)^{12}$, which is $(e^{\frac{2\pi}{3}\sqrt{-1}})^{12} + (e^{-\frac{2\pi}{3}\sqrt{-1}})^{12}$ i.e. 2.

    Since $n$ is even by assumption, the parities of the constant terms imply that $S$ is even and $P$ is odd.

    \[ \therefore a_{12(i+2)+2}(n) = \text{Even} \times a_{12(i+1)+2}(n) + \text{Odd} \times a_{12i+2}(n)\]

    Now, by inductive hypothesis, we have $\nu_2(a_{12(i+1)+2}(n)) = \nu_2(a_{12(i+1)+2}(n))$. Hence, together with the preceding equation, this implies:
    
    \[ \therefore \nu_2(a_{12(i+2)+2}(n)) = \nu_2(\text{Even} \times a_{12(i+1)+2}(n) + \text{Odd} \times a_{12i+2}(n))\]
    \[= \nu_2(a_{12(i+1)+2}(n))\nu_2(\text{Even} + \text{Odd})\]
    \[= \nu_2(a_{12(i+1)+2}(n)) \text{ as required.}\]
    
\end{proof}

\begin{thm}\label{thm:gcd}
    \[\gcd(n^n, a_n(n)) = \begin{cases}
    2^{1+\nu_2(\lfloor \frac{n}{12}\rfloor)}n^2 & \text{if } n \equiv 2 \text{ (mod 12)}\\
    n^2 & \text{if } n \equiv 5,8,11 \text{ (mod 12)}\\
    1 & \text{otherwise} \end{cases}\]
\end{thm}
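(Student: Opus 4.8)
The plan is to convert the gcd into a prime-by-prime comparison of $p$-adic valuations and then feed in the two preceding lemmas. Since every prime factor of $n^n$ already divides $n$, I would begin from the identity
\[
\gcd(n^n, a_n(n)) = \prod_{p \mid n} p^{\min(n\nu_p(n),\, \nu_p(a_n(n)))}.
\]
The first observation is that the minimum is always attained by $\nu_p(a_n(n))$: in each case handled below this valuation turns out to equal either $2\nu_p(n)$ or (for $p=2$, $n\equiv 2 \pmod{12}$) $3+\nu_2(\lfloor n/12\rfloor)$, and both quantities are strictly smaller than $n\nu_p(n)$ once $n\ge 3$ (the binomial-type bound $\nu_2(\lfloor n/12\rfloor)\le \log_2 n$ settles the second). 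Hence it suffices to pin down $\nu_p(a_n(n))$ for each $p\mid n$.

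I would next dispose of the two ``coprime'' residues using \cref{lem:an_mod_nnn}. When $n \equiv 1 \pmod 3$, reducing $a_n(n)\equiv n^2-1 \pmod{n^3}$ modulo any prime $p\mid n$ gives $a_n(n)\equiv -1\pmod p$; when $n\equiv 0\pmod 3$, the displayed value reduces to $a_n(n)\equiv 1\pmod p$ for odd $p\mid n$ (and for $p=2$ one checks $\nu_2$ of the term $\tfrac{n(n-1)}{2}n^2$ is $3\nu_2(n)-1\ge 2$, so $a_n(n)$ stays odd). In either case no prime dividing $n$ divides $a_n(n)$, the product collapses to $1$, and this is the ``otherwise'' branch, since the listed residues $2,5,8,11 \pmod{12}$ are precisely those with $n\equiv 2\pmod 3$.

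For $n \equiv 2 \pmod 3$ I would write, from \cref{lem:an_mod_nnn}, $a_n(n) = n^2 M + n^3 t$ with $M = \tfrac{(n+1)(n-2)}{2}$ and $t\in\Zz$, so that $\nu_p(a_n(n)) = 2\nu_p(n) + \nu_p(M+nt)$. For an odd prime $p\mid n$ one has $(n+1)(n-2)\equiv -2\pmod p$, hence $M\equiv -1\pmod p$, forcing $\nu_p(M+nt)=0$ and $\nu_p(a_n(n))=2\nu_p(n)$. The same reduction works at $p=2$ when $n\equiv 8\pmod{12}$: there $\nu_2(n-2)=1$ makes $M$ odd, so again $\nu_2(a_n(n))=2\nu_2(n)$. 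Thus every odd prime, together with the prime $2$ in the cases $n\equiv 5,8,11\pmod{12}$, contributes exactly $p^{2\nu_p(n)}$, and their product is $\prod_{p\mid n}p^{2\nu_p(n)}=n^2$, giving the second branch.

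The single genuinely obstructed point, and the reason \cref{lem:nu2(an)} is needed at all, is the prime $2$ when $n\equiv 2\pmod{12}$: here $\nu_2(n)=1$, so $\nu_2(n^2M)$ and $\nu_2(n^3t)$ are both at least $3$, and the mod-$n^3$ congruence only yields $\nu_2(a_n(n))\ge 3$ rather than its exact value. This is precisely the content \cref{lem:nu2(an)} supplies, namely $\nu_2(a_n(n)) = 3 + \nu_2(\lfloor n/12\rfloor)$. Combining this $2$-adic exponent with the odd-prime contribution $\prod_{p\text{ odd}}p^{2\nu_p(n)}$, and using $\nu_2(n)=1$ to write $n^2 = 2^2\prod_{p\text{ odd}}p^{2\nu_p(n)}$, I would rewrite
\[
2^{3+\nu_2(\lfloor n/12\rfloor)}\prod_{p\text{ odd}}p^{2\nu_p(n)} = 2^{1+\nu_2(\lfloor n/12\rfloor)}\,n^2,
\]
which is the first branch. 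I would finish by noting that $n\ge 3$ with $n\equiv 2\pmod{12}$ forces $n\ge 14$, so $\lfloor n/12\rfloor\ge 1$ and the valuation $\nu_2(\lfloor n/12\rfloor)$ is well defined.
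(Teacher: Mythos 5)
Your proof is correct and rests on the same two ingredients as the paper's argument --- \cref{lem:an_mod_nnn} for the residue of $a_n(n)$ modulo $n^3$ (whence the ``otherwise'' branch and the factor $F=\tfrac{(n+1)(n-2)}{2}$) and \cref{lem:nu2(an)} for the exact $2$-adic valuation when $n\equiv 2\pmod{12}$ --- with the same case split on $n$ modulo $3$ and modulo $12$. The only real difference is organizational: you compute $\gcd(n^n,a_n(n))$ prime-by-prime via $p$-adic valuations, getting $\nu_p(a_n(n))=2\nu_p(n)+\nu_p(M+nt)$ with $\nu_p(M+nt)=0$ directly, whereas the paper factors out $n^2$ and controls the leftover $\gcd(Qn+F,\,n^{n-2})$ through the divisibility $\gcd(Qn+F,\,n^{n-2})\mid\gcd(Qn+F,\,n)^{n-2}$; the two are equivalent, and your valuation bookkeeping arguably makes the ``exactly $n^2$'' conclusion in the $n\equiv 5,8,11$ cases more transparent, at the cost of having to note separately (as you do) that $\nu_p(a_n(n))$ never exceeds $n\nu_p(n)$.
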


\begin{proof}
    The last case is proven by \cref{lem:an_mod_nnn}, because if $n \equiv 0,1$ (mod 3) then $a_n(n) \equiv \pm 1$ (mod $n$). Now suppose $n \equiv 2$ (mod 3); we will prove the other two cases.

    By \cref{lem:an_mod_nnn}:
    \begin{equation}\label{eqn:gcdthm_an_mod_nnn}
        a_n(n) \equiv n^2\frac{(n+1)(n-2)}{2} \text{ (mod $n^3$)}
    \end{equation}
    
    For convenience, write $\frac{(n+1)(n-2)}{2} = F$. Let us consider $\gcd(F,n)$, utilizing the euclidean algorithm. By assumption, $n \equiv 2,5,8,11$ (mod 12).
    \begin{itemize}
        \item If $n \equiv 5,11$, then $n$ is odd, and so $\gcd(F,n) = \gcd((n+1)(n-2),n) = \gcd(2,n) = 1$.
        \item If $n \equiv 2,8$, then $n$ is even; write $n=2k$, then $\gcd(F,n) = \gcd((2k+1)(k-1),2k) = \gcd(-k-1,2k) = \gcd(k+1,2)$. Hence if $n \equiv 8$ then the gcd is 1 because $\frac{n}{2}$ is even, and if $n \equiv 2$ then the gcd is 2.
    \end{itemize}

    Now by \cref{eqn:gcdthm_an_mod_nnn}, we may write $a_n(n) = Qn^3 + n^2F$ for some integer $Q$. Thus $\frac{a_n(n)}{n^2} = Qn + F$, where by the above, $\gcd(F,n)$ is 1 if $n \equiv 5,8,11$ (mod 12) and 2 if $n \equiv 2$ (mod 12).
    
    But $\gcd\left(Qn+F,n^{n-2}\right)$ divides $\gcd\left((Qn+F)^{n-2},n^{n-2}\right)$, which equals $(\gcd(Qn+F,n))^{n-2} = (\gcd(F,n))^{n-2} = 1^{n-2}$ or $2^{n-2}$. Therefore $\gcd\left(\frac{a_n(n)}{n^2}, n^{n-2}\right)$ is a power of $1$ or $2$, i.e. it is $1$ if $n \equiv 5,8,11$ and a power of $2$ if $n \equiv 2$.
    
    Hence the second case of the theorem is proven, and for the remaining $n \equiv 2$ case we know that $\gcd\left(\frac{a_n(n)}{n^2}, n^{n-2}\right)$ is a power of 2. From now on, we shall assume $n \equiv 2$ (mod 12). Since the aforementioned gcd is a power of 2, we have that:
    \[\gcd\left(\frac{a_n(n)}{n^2}, n^{n-2}\right) = 2^{\min(\nu_2\left(\frac{a_n(n)}{n^2}\right), \nu_2(n^{n-2}))}\]
    Where $\nu_2(x)$ denotes the exponent of the highest power of 2 dividing $x$.
    \[\implies n^2\gcd\left(\frac{a_n(n)}{n^2}, n^{n-2}\right) = n^2 \cdot 2^{\min(\nu_2(a_n(n)) - 2\nu_2(n), (n-2)\nu_2(n))}\]
    Note that $\nu_2(n) = 1$ since $n \equiv 2$ (mod 12):
    \[\implies \gcd(a_n(n), n^n) = 2^{\min(\nu_2(a_n(n)) - 2, n-2)}n^2\]
    Furthermore, by \cref{lem:nu2(an)} we know that $\nu_2(a_n(n))$ is equal to $3 + \nu_2\left(\lfloor \frac{n}{12}\rfloor\right)$:
    \[\implies \gcd(n^n, a_n(n)) = 2^{\min(1 + \nu_2(\lfloor \frac{n}{12}\rfloor), n-2)}n^2\]
    \[= n^2 \min(2^{1 + \nu_2(\lfloor \frac{n}{12}\rfloor)}, 2^{n-2})\]
    But $2^{1 + \nu_2(\lfloor\frac{n}{12}\rfloor)}$ is certainly smaller than $2^{n-2}$, because the former is at most $2 \cdot \frac{n}{12}$.
    \[\therefore \gcd(n^n, a^n) = 2^{1 + \nu_2(\lfloor \frac{n}{12}\rfloor)}n^2\]
    And the theorem is proven.
    
\end{proof}

\begin{thm}\label{thm:e3_not_int}
$E_3(n) \in \Nn \iff n=3$.
\end{thm}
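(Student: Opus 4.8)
The plan is to turn integrality into a size comparison and then kill it for all large $n$ with an elementary ceiling on $E_3(n)$. By \cref{thm:disc_soln} we have $E_3(n)=n^n/a_n(n)$, so $E_3(n)\in\Nn$ exactly when $a_n(n)\mid n^n$, i.e. exactly when $\gcd(n^n,a_n(n))=a_n(n)$. The forward implication is then immediate: for $n=3$ we have $a_3(3)=1$, hence $E_3(3)=27\in\Nn$. (For $n=1,2$ one can never see three strictly increasing values, so $E_3$ is infinite and the claim is vacuous; consistently $a_1(1)=a_2(2)=0$.) Everything reduces to showing $E_3(n)\notin\Nn$ for every $n\ge 4$.

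The first step is to record that the common divisor is always small. By \cref{thm:gcd}, in the nontrivial cases $\gcd(n^n,a_n(n))$ equals $n^2$ or $2^{\,1+\nu_2(\lfloor n/12\rfloor)}n^2$; since $2^{\nu_2(m)}\le m$, the latter is at most $2\lfloor n/12\rfloor\cdot n^2\le\tfrac{n^3}{6}$. Hence in all cases
\[\gcd(n^n,a_n(n))\le n^3 \qquad (n\ge 4).\]
Consequently, if $E_3(n)$ were an integer then $a_n(n)=\gcd(n^n,a_n(n))\le n^3$, which would force $E_3(n)=n^n/a_n(n)\ge n^{\,n-3}$.

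To contradict this I would bound $E_3(n)$ from above by a renewal argument that avoids the (only conjectured) monotonicity of $E_3$. Partition the roll sequence into disjoint blocks of three consecutive rolls; each block is strictly increasing with probability $p=\binom{n}{3}/n^3$, independently of the others. If $T$ is the index of the first increasing block then $T$ is geometric with $E[T]=1/p$, and the true stopping time is at most $3T$, so
\[E_3(n)\le\frac{3}{p}=\frac{18n^2}{(n-1)(n-2)}\le 48 \qquad (n\ge 4),\]
the last bound holding because the middle expression is decreasing in $n$ with value $48$ at $n=4$. Combining the two estimates, an integral $E_3(n)$ would satisfy $n^{\,n-3}\le E_3(n)\le 48$, which is false for every $n\ge 6$ (already $6^3=216>48$). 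This settles all $n\ge 6$, and the two remaining values are checked by hand: $a_4(4)=15\nmid 256$ and $a_5(5)=225\nmid 3125$, so $E_3(4),E_3(5)\notin\Nn$.

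The main obstacle is exactly this upper bound on $E_3(n)$: \cref{thm:gcd} alone only shows the common factor is polynomially small, and to conclude one needs a genuinely $n$-free ceiling on $E_3(n)$ that beats $n^{\,n-3}$. The monotonicity remark would give $E_3(n)\le E_3(3)=27$ but is unproven, so the crux is to replace it with the self-contained block/renewal estimate above; once that is in place the rest is the reduction, the routine gcd bound, and a two-line finite check.
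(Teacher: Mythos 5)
Your proof is correct, but it takes a genuinely different route from the paper's. Both arguments start identically: integrality forces $a_n(n)=\gcd(n^n,a_n(n))$, and \cref{thm:gcd} pins that gcd down. The paper stays number-theoretic from there: for $n\equiv 2\pmod 3$ it compares $a_n(n)$ with $n^2(n+1)(n-2)/2$ modulo $n^3$ to force $n\mid 2C+2$ (too small), and for $n\equiv 0,1\pmod 3$ it deduces $a_n(n)=1$, tracks the period-$6$ behaviour of $a_i(n)$ modulo $n-1$, and concludes $(n-1)\mid 2$. You instead note that \cref{thm:gcd} bounds the gcd by $n^3$ in every case, so integrality would force $E_3(n)=n^n/a_n(n)\ge n^{n-3}$, and you contradict this with a self-contained renewal estimate: grouping rolls into disjoint triples, each strictly increasing with probability $\binom{n}{3}/n^3$, the stopping time is dominated pathwise by three times a geometric variable, giving $E_3(n)\le 18n^2/\bigl((n-1)(n-2)\bigr)\le 48$ for $n\ge 4$. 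That bound is sound and is something the paper only gestures at via the unproven monotonicity remark; it kills all $n\ge 6$, and your finite checks $a_4(4)=15\nmid 4^4$ and $a_5(5)=225\nmid 5^5$ (consistent with the paper's table) close the argument. What your route buys is robustness and brevity: it avoids \cref{lem:an_mod_nnn} and the mod-$(n-1)$ periodicity entirely, at the cost of two hand computations and an appeal back to the probabilistic definition of $E_3(n)$; the paper's route is uniform in $n$, purely algebraic, and identifies the exact congruence obstructions to integrality.
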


\begin{proof}
    By \cref{thm:disc_soln}, $E_3(n)$ is given by $n^n / a_n(n)$. Also recall that, by \cref{thm:gcd}:
    \[\gcd(n^n, a_n(n)) = \begin{cases}
        2^{1+\nu_2(\lfloor \frac{n}{12}\rfloor)}n^2 & \text{if } n \equiv 2 \text{ (mod 12)}\\
        n^2 & \text{if } n \equiv 5,8,11 \text{ (mod 12)}\\
        1 & \text{otherwise}
    \end{cases}\]
    Note that if $n=3$, then $E_3(n) = 27$ is indeed an integer (since by computation, $a_3 = 1$ when $n=3$). Thus it remains to show the other direction, i.e. that $E_3(n) \in \Nn \implies n=3$.

    Suppose that $E_3(n) \in \Nn$, we will show that $n=3$.
    \[E_3(n) \in \Nn \implies \frac{n^n}{a_n(n)} \in \Nn\]
    \begin{equation}\label{eqn:gcd_eq_a}
    \implies \gcd(n^n, a_n(n)) = a_n(n)
    \end{equation}

    Let us consider cases; we will show that only one of them is possible.
    \begin{itemize}
        \item If $n \equiv 2$ (mod 3), let us show a contradiction. By \cref{lem:an_mod_nnn} and \cref{eqn:gcd_eq_a} it holds that:
        \[\gcd(n^n,a_n(n)) \equiv n^2\frac{(n+1)(n-2)}{2} \text{ (mod $n^3$)}\]
        by \cref{thm:gcd}, $\gcd(n^n,a_n(n)) = Cn^2$ where $C$ is $2^{1+\nu_2(\lfloor \frac{n}{12}\rfloor)}$ if $n \equiv 2$ (mod 12), and $1$ otherwise.
        \[\therefore Cn^2 \equiv n^2\frac{(n+1)(n-2)}{2} \text{ (mod $n^3$)}\]
        Hence there exists an integer $Q$ such that $ Cn^2 = Qn^3 + n^2\frac{(n+1)(n-2)}{2}$. This simplifies to $n^2 - n + 2Qn = 2C+2$, and so $n$ must divide $2C+2$, therefore $n \leq 2C+2$. But this is impossible, because by definition, $C$ is at most $2^{1+\nu_2(\lfloor \frac{n}{12}\rfloor)}$, which is at most $2 \cdot \frac{n}{12}$, and so $2C+2$ is at most $\frac{n}{3} + 2$. Hence this case cannot occur, i.e. $n \not\equiv 2$ (mod 3).

        \item If $n \equiv 0,1$ (mod 3), then let us show that $n=3$. By \cref{thm:gcd} and \cref{eqn:gcd_eq_a}, $a_n(n)$ must equal 1 (which is the gcd).
        
        This implies that by \cref{lem:an_mod_nnn} we have that $n$ must be $0$ mod 3 (because the other cases do not yield 1 when considering mod $n$ instead of $n^3$), with $a_n(n) \equiv -\frac{n(n-1)}{2}n^2 + 1$ (mod $n^3$). Hence $\exists Q \in \Zz$ with $1 = Qn^3 - \frac{n^3(n-1)}{2} + 1$. This implies that $Q = \frac{n-1}{2}$ and so $n$ must be odd. Since we already showed $n \equiv 0$ (mod 3), it follows that $n \equiv 3$ (mod 6).
        
        Now consider $a_n(n)$ mod $n-1$. Recall the definition of the $a_i(x)$, letting $x=n$:
        \[a_1 = n-1, a_2 = n(n-2), a_{i+2} = (2n-1)a_{i+1} - (n^2-n+1)a_i\]
        Thus, modulo $n-1$ we have:
        \[a_1 \equiv 0, a_2 \equiv -1, a_{i+1} \equiv a_{i+1} - a_i\]
        Listing out the first few terms of this sequence, they are $0, -1, -1, 0, 1, 1, 0, -1, \dots$ and so the period is $6$. We showed that $n \equiv 3$ (mod 6), therefore $a_n(n) \equiv -1$ (mod $n-1$) (because the third term of this periodic sequence is -1). However we also showed that $a_n(n) = 1$, therefore $-1 \equiv 1$ (mod $n-1$). It follows that $(n-1) \mid 2$, and thus $n=3$ as required, because we are only considering $n \geq 3$.
        
    \end{itemize}
    
\end{proof}

\section{Solving the limiting case}\label{sec:cont}

Let $P(x)$ be the probability generating function for the number of samples of a real number between $0$ and $1$ until 3 increasing values are seen. It this section, it will shown that the generating function is:
\[P(x) = 1 + \frac{\sqrt3}{2}e^{\frac{x}{2}}(x-1)\sec\left(\frac{\pi}{6} + x\frac{\sqrt3}{2}\right)\]
Note that, other than the constant term, $P(x)$ is equal to $F(x)$, the exponential generating function for permutations with exactly one increasing runs of length 3 or greater, which exactly consists of the last 3 elements (because any sequence of samples of length $n$ in $[0,1]$ can be flattened into a permutation of $1..n$). The constant terms are different because by convention, $F(0)=1$, whereas the probability that the sampling process takes zero turns is 0 (because it always takes at least three turns).

Similarly to \cite{gessel14}, we shall derive recurrence relations for $f(x)$ and two other helper functions $g(x)$ and $h(x)$, then we will use these relations to find $F(x)$, and thus $P(x)$.

\begin{defn}
    Let $F(x)$ be the exponential generating function for permutations with no increasing runs of length 3 or greater except for the last three which form an increasing run. Similarly, let $G(x)$ (resp. $H(x)$) be the exponential generating function for permutations with no increasing runs of length 3 or greater, and the last two (resp. one) elements form an increasing run. In other words, $G(x)$ ends in one increase, and $H(x)$ ends in a decrease.
    
    Define $f(n), g(n), h(n)$ in the usual way, to be the coefficients of $\frac{x^n}{n!}$ in their respective generating functions.
\end{defn}

\begin{lem}\label{lem:rec_fgh}[Recurrence relations for $f(x), h(x), h(x)$]
    For $n \geq 1$,
    \[f(n+1) = f(n) + g(n) + \sum_{k=1}^{n-1} {n \choose k}h(k)f(n-k)\]
    \[g(n+1) = g(n) + h(n) + \sum_{k=1}^{n-1} {n \choose k}h(k)g(n-k)\]
    \[h(n+1) = h(n) + 0 + \sum_{k=1}^{n-1} {n \choose k}h(k)h(n-k)\]
    Where the sums are zero if $n=1$. Also,
    \[f(0) = 1, f(1) = f(2) = 0\]
    \[g(0) = 1, g(1) = 0\]
    \[h(0) = 1\]
\end{lem}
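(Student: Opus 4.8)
The plan is to establish these three recurrences by a combinatorial decomposition of permutations according to the structure of their final increasing run, conditioning on the position of the rightmost descent (or equivalently the position at which the last maximal increasing run begins). The key observation is that any permutation counted by $F$, $G$, or $H$ splits uniquely at its last descent: everything before the descent forms a permutation ending in a decrease (counted by $H$ on the smaller values), and everything from the descent onward forms a single increasing run. I would phrase this using the standard exponential-generating-function product rule, where $\binom{n}{k}$ accounts for choosing which $k$ of the $n$ available relative ranks go into the left block.

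First I would set up the decomposition precisely. Given a permutation of length $n+1$ with the prescribed run structure, I would look at its maximal terminal increasing run. Either the whole permutation is one increasing run (the degenerate case that produces the ``$f(n)$'', ``$g(n)$'', ``$h(n)$'' isolated terms on the right-hand sides, coming from appending a new largest or suitably-placed element), or there is a genuine last descent, after which the remaining suffix is increasing. In the latter case the prefix up to and including the descent position is a permutation that itself ends in a decrease, hence is counted by $h$; and the suffix is an increasing run whose length is controlled by which generating function we are building ($F$ wants the final run to have length exactly three, $G$ length exactly two after the split, $H$ length one). The factor $h(k)$ for the prefix and the appropriate factor $f$, $g$, or $h$ for the completion, together with the binomial coefficient for interleaving the value sets, give the convolution sums. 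I would be careful that the $h$-equation has a ``$+0$'' in place of the middle term precisely because a permutation ending in a descent cannot be obtained by extending an increasing run of length one with one more element without creating a descent in the wrong place; spelling out why that middle term vanishes is the cleanest way to explain the asymmetry among the three recurrences.

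Next I would verify the small-index conventions directly: $f(0)=g(0)=h(0)=1$ by the empty-permutation convention, $h(1)$ and $g(1)$ and $f(1),f(2)$ by listing the (non)existence of short permutations with the required terminal run, noting that $F$ needs length at least three so $f(1)=f(2)=0$, while $g(1)=0$ because a single element cannot end in a length-two increasing run. I would also check that the stated ``sums are zero if $n=1$'' matches the empty range $\sum_{k=1}^{0}$, so that the recurrences reduce correctly to $f(2)=f(1)+g(1)$, $g(2)=g(1)+h(1)$, and $h(2)=h(1)$ at the first nontrivial step, which I would confirm against the small coefficient table given in the introduction.

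The main obstacle I anticipate is making the decomposition genuinely bijective rather than merely plausible: I must argue that the ``last descent'' splitting is well-defined and that gluing an $h$-prefix to an increasing suffix never accidentally creates a forbidden increasing run of length $3$ straddling the join. This is where the condition that the prefix \emph{ends in a decrease} does the real work, since it guarantees the junction between the two blocks is a descent and so no long run crosses it; I would state this as the crucial \emph{no-interference} lemma and prove it in one or two lines. A secondary subtlety is bookkeeping the exact target run-length in the $F$ and $G$ cases so that the terminal run has length exactly three (for $F$) or exactly two (for $G$) and not more; I expect to handle this by letting $G$ and $H$ carry the ``grow the terminal run by one'' bookkeeping, so that $F$'s recurrence can reuse $g$ and the convolution against $h$ without re-deriving the run-length control from scratch.
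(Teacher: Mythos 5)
Your decomposition is not the one that yields these recurrences, and as described it does not work. Two concrete problems. First, the convolution $\sum_{k=1}^{n-1}\binom{n}{k}h(k)f(n-k)$ distributes only $n$ of the $n+1$ elements between the two blocks, so a distinguished element must be deleted before splitting; splitting at the rightmost descent deletes nothing, so your block sizes would sum to $n+1$ and you would obtain $\binom{n+1}{k}$, not $\binom{n}{k}$. Second, the block preceding the last descent need not end in a decrease: for $\pi = 1\,4\,2\,3\,5$ (a permutation counted by $f(5)$) the only descent is $4>2$, and the block before it is $1\,4$, which ends in an ascent, so that block is counted by $g+h$ rather than by $h$. (Under the alternative reading where the prefix runs up to and including the bottom of the descent, the suffix is no longer the full terminal run and an order constraint links the two blocks across the cut.) More seriously, once you choose which values go into each block via a binomial coefficient, the requirement that the junction actually \emph{be} a descent is a constraint on the chosen values, not something you get for free, so the exponential product rule does not apply and your proposed one-or-two-line ``no-interference lemma'' is false for arbitrary interleavings. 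The attribution of the isolated terms $f(n)$, $g(n)$, $h(n)$ to ``the whole permutation is one increasing run'' is also off: that case describes a single permutation, not $f(n)$ of them.

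The repair is precisely the feature of the maximum that your write-up gestures at (``$n$ available relative ranks,'' ``appending a new largest element'') but does not exploit: delete the value $n+1$ and split at its position. For \emph{every} interleaving of values, the step into $n+1$ is automatically an ascent and the step out of $n+1$ is automatically a descent. The automatic ascent into $n+1$ forces the left block to end in a descent (otherwise an increasing run of length $3$ would straddle the junction), giving the factor $h(k)$; the automatic descent out of $n+1$ lets the right block be an arbitrary $f$- (resp.\ $g$-, $h$-) structure, giving $f(n-k)$ (resp.\ $g(n-k)$, $h(n-k)$); and the boundary terms arise from $n+1$ occupying the first position (contributing $f(n)$, $g(n)$, $h(n)$) or the last position (contributing $g(n)$, $h(n)$, $0$ --- zero in the last case because a permutation ending with its maximum ends in an ascent, which is the real source of the ``$+0$'' asymmetry). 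Your verification of the initial conditions and the $n=1$ consistency check against the table are correct and can be kept.
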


\begin{proof}
    The initial conditions are true by convention. To calculate $f(n+1)$, take a permutation $\pi \in S_{n+1}$ and remove $n+1$ (the largest element). Since $n \geq 1$ by assumption, the first and last elements are different.

    \begin{itemize}
    \item If $n+1$ is the first element, then we have $f(n)$ ways, because the element following $n+1$ will definitely be a decrease.
    \item If $n+1$ is the last element ,then we have $g(n)$ ways, because the first $n$ elements must end in only one increase, so that adding $n+1$ yields two increases i.e. a run of length 3.
    \item If $n+1$ is not the first or last element, then remove it. This splits $\pi$ into two; let the left section be of length $k$. There are $n \choose k$ choices for the values in the left section; flattening these in the usual way yields $h(k)$ ways (because the element immediately following the left section is $n+1$ which will definitely be an increase, so the left section cannot end in an increase). Flattening the right section (of length $n-k$) yields $f(n-k)$ ways because the element following $n+1$ will definitely be a decrease. Thus, summing over the possible values of $k$, this case yields $\sum_{k=1}^{n-1} {n \choose k}h(k)f(n-k)$ ways.
    \end{itemize}
    
    Summing each case, there are $f(n+1) = f(n) + g(n) + \sum_{k=1}^{n-1} {n \choose k}h(k)f(n-k)$ ways, and so the recurrence for $f$ is proven. The other two recurrences can be derived in a similar way.
\end{proof}

\begin{lem}\label{lem:DEs}[Differential equations for $F(x), G(x), H(x)$]
    \[1 + F'(x) = G(x) - H(x) + H(x)F(x)\]
    \[1 + G'(x) = H(x)G(x)\]
    \[H'(x) = H(x)^2 - H(x) + 1\]
    \[F(0) = G(0) = H(0) = 1\]
\end{lem}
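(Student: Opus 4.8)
The plan is to convert the combinatorial recurrences of \cref{lem:rec_fgh} into the stated differential equations by exploiting the standard fact that a binomial convolution of two sequences corresponds to the product of their exponential generating functions. Concretely, if $H(x) = \sum_n h(n)\,x^n/n!$ and $F(x) = \sum_n f(n)\,x^n/n!$, then $H(x)F(x) = \sum_n \left(\sum_{k=0}^n \binom{n}{k} h(k) f(n-k)\right)\frac{x^n}{n!}$, while differentiation shifts the index: $F'(x) = \sum_{n \geq 0} f(n+1)\frac{x^n}{n!}$, and likewise for $G'$ and $H'$. So the whole argument reduces to rewriting each quantity $f(n+1)$, $g(n+1)$, $h(n+1)$ in terms of coefficients of products of the generating functions, and reading off the corresponding functional identity.

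The first step is to observe that the convolution in each recurrence runs only over $1 \le k \le n-1$, whereas the product $H(x)F(x)$ (and similarly $H(x)G(x)$ and $H(x)^2$) also contains the two boundary terms $k=0$ and $k=n$. I would add these back explicitly. For the $f$ recurrence, the $k=0$ term is $h(0)f(n) = f(n)$ and the $k=n$ term is $h(n)f(0) = h(n)$, so $\sum_{k=1}^{n-1}\binom{n}{k}h(k)f(n-k)$ equals the $n$-th coefficient of $H(x)F(x)$ minus $f(n)$ minus $h(n)$. Substituting into $f(n+1) = f(n) + g(n) + \cdots$, the $f(n)$ contributions cancel and I am left with $f(n+1) = g(n) - h(n) + [\,H F\,]_n$ for $n \geq 1$, which is exactly the coefficient identity encoded by $F'(x) = G(x) - H(x) + H(x)F(x)$. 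The same bookkeeping handles $g$ (the reinstated terms are $g(n)$ and $h(n)$, and after cancellation $g(n+1)$ equals the $n$-th coefficient of $H(x)G(x)$) and $h$ (both boundary terms equal $h(n)$, so that $h(n+1) = [\,H^2\,]_n - h(n)$).

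The second step is to handle the constant term, which is the source of the additive $1$'s. Since the recurrences only hold for $n \ge 1$, I must check $n=0$ separately. For $F$, the $x^0$ coefficient on the product side is $g(0) - h(0) + h(0)f(0) = 1 - 1 + 1 = 1$, whereas the coefficient on the $F'$ side is $f(1) = 0$; the discrepancy of $1$ is precisely the extra constant in $1 + F'(x) = G(x) - H(x) + H(x)F(x)$. The identical check for $G$ gives $1$ against $g(1)=0$, producing the $+1$ in the $G$ equation. For $H$, the $n=0$ coefficient of $H(x)^2 - H(x) + 1$ is $h(0)^2 - h(0) + 1 = 1$, which must equal $h(1)$; here I would note that $h(1) = 1$ (a single-element permutation vacuously ends in a decrease), so the $n=0$ case is already consistent and no additive constant appears, matching $H'(x) = H(x)^2 - H(x) + 1$. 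The initial conditions $F(0) = G(0) = H(0) = 1$ are immediate from $f(0) = g(0) = h(0) = 1$.

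The main obstacle is not any deep computation but the careful accounting of the truncated convolution: one must correctly identify and reinstate the two boundary terms $k=0$ and $k=n$ so that the sum becomes a genuine generating-function product, then verify that the explicit linear terms already present in each recurrence cancel against them in exactly the right way. The only genuinely separate point requiring attention is the $n=0$ edge case, where the recurrences fail and the constant-term discrepancies produce (or, for $H$, fail to produce) the additive constants; this is where the value $h(1)=1$ must be pinned down to confirm that the $H$ equation carries no extra constant.
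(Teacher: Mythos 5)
Your proposal is correct and follows essentially the same route as the paper: reinstating the $k=0$ and $k=n$ boundary terms to turn each truncated binomial convolution into a genuine product of exponential generating functions, and then accounting for the $n=0$ discrepancy to produce (or, for $H$, not produce) the additive constants. The paper only writes out the $F$ case and asserts the other two are similar, whereas you carry out all three; the bookkeeping and the verification that $h(1)=1$ makes the $H$ equation constant-free are both accurate.
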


\begin{proof}
    The initial conditions follow from those of \cref{lem:rec_fgh}, because $F(0) = f(0)$ etc.
    Let us focus on the first equation.
    By \cref{lem:rec_fgh}, for $n \geq 1$ we have:
    \[f(n+1) = f(n) + g(n) + \sum_{k=1}^{n-1} {n \choose k}h(k)f(n-k)\]
    Since the sum was defined to be zero when $n=1$, and $h(0) = 1$, the $f(n)$ term can be absorbed into the sum:
    \[\implies f(n+1) = g(n) - h(n) + \sum_{k=0}^{n} {n \choose k}h(k)f(n-k)\]
    When $n=0$, the LHS is 0 and the RHS is $0 + 0 + h(0)f(0) = 1$.
    After multiplying by $x^n$ and summing over $n \geq 1$, we may add 1 to both sides to include the $n=0$ term:
    \[1 + \sum_{n \geq 1}f(n+1)\frac{x^n}{n!} = 1 + \sum_{n\geq 1}g(n)\frac{x^n}{n!} - \sum_{n\geq 1}h(n)\frac{x^n}{n!} + \sum_{n \geq 1}\frac{x^n}{n!}\sum_{k=0}^{n-1} {n\choose k}h(k)f(n-k) \]
    \[1 + \sum_{n \geq 0}f(n+1)\frac{x^n}{n!} = \sum_{n\geq 0}g(n)\frac{x^n}{n!} - \sum_{n\geq 0}h(n)\frac{x^n}{n!} + \sum_{n \geq 0}\frac{x^n}{n!}\sum_{k=0}^{n-1} {n\choose k}h(k)f(n-k) \]
    \[\implies 1 + F'(x) = G(x) - H(x) + \sum_{n\geq 0} \sum_{k=0}^n h(k)\frac{x^k}{k!}f(n-k)\frac{x^{n-k}}{(n-k)!} \]
    \[= G(x) - H(x) + H(x)F(x) \text{ as required.}\]

    The second and third equations are similarly derived.
    
\end{proof}

\begin{prop}\label{prop:DEsolution}
     \[H(x) = \frac{1}{2} + \frac{\sqrt3}{2}\tan\left(\frac{\pi}{6} + x\frac{\sqrt3}{2}\right)\]
     \[G(x) = \sec\left(\frac{\pi}{6} + x\frac{\sqrt3}{2}\right)\left(\frac{\sqrt3}{2}e^{x/2} - \sin\left(x\frac{\sqrt3}{2}\right)\right)\]
     \[F(x) = 2 + \frac{\sqrt3}{2}e^{\frac{x}{2}}(x-1)\sec\left(\frac{\pi}{6} + x\frac{\sqrt3}{2}\right)\]
\end{prop}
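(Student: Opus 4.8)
The plan is to exploit the triangular structure of the system in \cref{lem:DEs}: the equation for $H$ is autonomous and involves $H$ alone, the equation for $G$ becomes linear in $G$ once $H$ is known, and the equation for $F$ becomes linear in $F$ once $G$ and $H$ are known. So I would solve the three in that order, each as a first-order ODE, using the shared condition $F(0)=G(0)=H(0)=1$ to pin down the constant of integration at each stage. Since each right-hand side is analytic near $x=0$, the initial value problem has a unique solution, and that solution is precisely the EGF whose Taylor coefficients are $h(n),g(n),f(n)$; consequently it is equally legitimate to differentiate the three claimed closed forms and verify that they satisfy the ODEs and the initial conditions, a route that sidesteps all of the integration below.

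First I would integrate $H' = H^2 - H + 1$ by separation of variables. Completing the square gives $H^2 - H + 1 = (H-\tfrac12)^2 + \tfrac34$, so $\int \frac{dH}{(H-1/2)^2 + 3/4} = x + c$ yields $\tfrac{2}{\sqrt3}\arctan\!\big(\tfrac{2H-1}{\sqrt3}\big) = x + c$. The condition $H(0)=1$ uses $\arctan(1/\sqrt3) = \pi/6$ to fix $c$, and inverting the arctangent produces $H(x) = \tfrac12 + \tfrac{\sqrt3}{2}\tan\big(\tfrac{\pi}{6} + x\tfrac{\sqrt3}{2}\big)$.

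Next I would rewrite the remaining equations as the linear ODEs $G' - HG = -1$ and $F' - HF = G - H - 1$. Both share the homogeneous operator $\tfrac{d}{dx} - H$, hence the same integrating factor $\mu(x) = e^{-\int H\,dx}$. Using $\int \tan\big(\tfrac{\pi}{6}+x\tfrac{\sqrt3}{2}\big)\,dx = -\tfrac{2}{\sqrt3}\ln\cos\big(\tfrac{\pi}{6}+x\tfrac{\sqrt3}{2}\big)$, I obtain $\int H\,dx = \tfrac{x}{2} - \ln\cos\big(\tfrac{\pi}{6}+x\tfrac{\sqrt3}{2}\big)$, so $\mu(x) = e^{-x/2}\cos\big(\tfrac{\pi}{6}+x\tfrac{\sqrt3}{2}\big)$. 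Then $(\mu G)' = -\mu$ and, once $G$ is in hand, $(\mu F)' = \mu(G-H-1)$; I would integrate these using the standard antiderivatives of $e^{ax}\cos(bx+\phi)$ and $e^{ax}\sin(bx+\phi)$, evaluate the constants at $x=0$ via $G(0)=F(0)=1$, divide back by $\mu$, and collapse everything with $\sec = 1/\cos$ to reach the stated forms.

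The main obstacle is purely computational: carrying the fixed phase $\pi/6$ correctly through the exponential-times-trigonometric integrals for $\mu G$ and $\mu F$, and then simplifying the resulting mixtures of $e^{x/2}$, $\sin$, $\cos$, and $\sec$ into the compact closed forms claimed. Because this bookkeeping is error-prone, I would cross-check the final $H$, $G$, $F$ by substituting them back into the three differential equations — for $H$ this reduces, with $t = \tan\big(\tfrac{\pi}{6}+x\tfrac{\sqrt3}{2}\big)$, to the identity $\tfrac34\sec^2 = \tfrac34(t^2+1)$ — and by confirming $H(0)=G(0)=F(0)=1$, which by the uniqueness remark above constitutes an independent proof in its own right.
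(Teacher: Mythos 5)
Your proposal follows essentially the same route as the paper: solve the autonomous equation for $H$ by separation of variables (fixing $c=\pi/6$ from $H(0)=1$), then solve the linear equations for $G$ and $F$ with the common integrating factor $e^{-x/2}\cos\left(\frac{\pi}{6}+x\frac{\sqrt3}{2}\right)$ arising from $\int H\,dx = \frac{x}{2}-\ln\cos\left(\frac{\pi}{6}+x\frac{\sqrt3}{2}\right)$. The paper likewise omits the final computations, and your added observation that uniqueness of the analytic IVP legitimizes direct verification of the closed forms is a sound (if unstated in the paper) safeguard.
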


\begin{proof}
    Recall \cref{lem:DEs}:
    \begin{equation}\label{eqn:F'(x)}
        1 + F'(x) = G(x) - H(x) + H(x)F(x)
    \end{equation}
    \begin{equation}\label{eqn:G'(x)}
        1 + G'(x) = H(x)G(x)
    \end{equation}
    \begin{equation}\label{eqn:H'(x)}
        H'(x) = H(x)^2 - H(x) + 1
    \end{equation}
    \[F(0) = G(0) = H(0) = 1\]
    
    We first find $H(x)$ by solving \cref{eqn:H'(x)}, which is separable into $\frac{H'}{H^2-H+1} = 1$. Integrating both sides with respect to $x$, we deduce that $H(x) = \frac{1}{2} + \frac{\sqrt3}{2}\tan(c + x\frac{\sqrt3}{2})$ where $c$ is a constant to be determined. Then the initial condition $H(0)=1$ yields $c = \frac{\pi}{6}$, as required.

    Next we find $G(x)$ by solving \cref{eqn:G'(x)} with an integrating factor. The integral of $H(x)$ is $\frac{x}{2} - \ln(\cos(\frac{\pi}{6} + x\frac{\sqrt3}{2}))$; the rest of the derivation of $G(x)$ is computation.

    Finally, we find $F(x)$ by using the same integrating factor as before, because \cref{eqn:F'(x)} rearranges to $F'(x) -H(x)F(x) = G(x)-H(x)-1$.

    The calculations can be verified, but they are omitted for brevity.
\end{proof}

\begin{rem}
    The first few values enumerated by these generating functions are:
    \begin{center}
    \begin{tabular}{ c|c|c|c|c|c|c|c|c|c|c|c } 
    $n$ & 0 & 1 & 2 & 3 & 4 & 5 & 6 & 7 & 8 & 9 & 10\\ 
    \hline
    $f(n)$ & 1 & 0 & 0 & 1 & 3 & 15 & 71 & 426 & 2778 & 20845 & 171729\\
    $g(n)$ & 1 & 0 & 1 & 2 & 8 & 31 & 160 & 910 & 6077 & 45026 & 373220\\
    $h(n)$ & 1 & 1 & 1 & 3 & 9 & 39 & 189 & 1107 & 7281 & 54351 & 448821\\
    \end{tabular}
    \end{center}
\end{rem}

\begin{rem}
    Using \cref{lem:rec_fgh} to compute $\frac{f(n)}{g(n)}$ for large values of $n$, it appears to approach a limiting value of 0.4610896095... which we may prove and find a closed form for by using Theorem IV.7 of \cite{flajolet09}. We know that:
    \[\lim_{n \to \infty}\frac{f(n)/n!}{g(n)/n!} = \lim_{x\to a}\frac{F(x)}{G(x)}\]
    where $a = \frac{2\pi\sqrt3}{9}$ is the pole of both functions closest to the origin. This can be computed using \cref{prop:DEsolution} and multiplying the top and bottom by $\cos(\frac{\pi}{6} + x\frac{\sqrt3}{2})$; it can be verified that the resulting closed form is:
    \[\lim_{n\to\infty}\frac{f(n)}{g(n)} = \frac{a-1}{1-e^{-a/2}} \;\;\;,\; a=\frac{2\pi\sqrt3}{9}\]
    which matches the numerical value above. Similarly, $\lim_{n\to\infty}\frac{g(n)}{h(n)} = e^{a/2}-1$.
\end{rem}

\begin{rem}
    The values of $h(n)$ follow the sequence [A080635] on \cite{oeis}, for the number of permutations without double falls and without initial falls. This is as expected, since a valid permutation defined by $g(n)$ can be listed backwards and with the labels flipped.
\end{rem}

\begin{thm}\label{thm:pgf}
    The probability generating function for the number of samples of a real number between 0 and 1 until obtaining three increasing values is:
     \[P(x) = 1 + \frac{\sqrt3}{2}e^{\frac{x}{2}}(x-1)\sec\left(\frac{\pi}{6} + x\frac{\sqrt3}{2}\right)\]
\end{thm}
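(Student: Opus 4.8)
The plan is to derive $P(x)$ from the generating function $F(x)$ that was already computed in \cref{prop:DEsolution}. The key observation, already flagged in the text preceding the theorem, is that $P(x)$ and $F(x)$ enumerate essentially the same objects: a sequence of $r$ real samples in $[0,1]$ that first completes an increasing run of three exactly on turn $r$ can be ``flattened'' into a permutation of $\{1,\dots,r\}$ with no increasing run of length $\geq 3$ except for a final run of exactly length $3$, and this flattening is a bijection (distinct samples occur with probability $1$, and each of the $r!$ relative orderings is equally likely). Hence for $r \geq 1$ the probability that the process stops on turn $r$ equals $f(r)/r!$, which is precisely the coefficient of $x^r/r!$ in $F(x)$ interpreted as an ordinary power-series coefficient extraction; said differently, $P(x)$ and $F(x)$ agree coefficientwise in every positive degree.

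The two functions differ only in their constant term. By the initial conditions of \cref{lem:rec_fgh} we have $f(0)=1$, so the constant term of $F(x)$ is $1$; but the sampling process always requires at least three turns, so the probability of stopping on turn $0$ is $0$, giving constant term $0$ in $P(x)$. Therefore I would establish the identity
\[
P(x) = F(x) - 1.
\]
Substituting the closed form $F(x) = 2 + \frac{\sqrt3}{2}e^{x/2}(x-1)\sec\!\left(\frac{\pi}{6} + x\frac{\sqrt3}{2}\right)$ from \cref{prop:DEsolution} immediately yields
\[
P(x) = 1 + \frac{\sqrt3}{2}e^{x/2}(x-1)\sec\!\left(\frac{\pi}{6} + x\frac{\sqrt3}{2}\right),
\]
which is the claimed expression.

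The only genuine content, and hence the main obstacle, is justifying the bijection between length-$r$ sample sequences (up to the measure-zero ties) and the permutation class counted by $f(r)$, together with the fact that relative order is uniformly distributed. Once this correspondence is in place the result is purely formal. I would take care to state explicitly that the ``run of three increasing values'' stopping condition on the samples translates exactly into ``no increasing run of length $\geq 3$ before position $r$, and positions $r-2,r-1,r$ forming an increasing run,'' matching the defining description of $F(x)$; this equivalence, combined with the constant-term adjustment, completes the proof.
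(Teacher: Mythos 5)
Your proposal is correct and follows essentially the same route as the paper: identify the positive-degree coefficients of $P(x)$ with those of $F(x)$ via the flattening of sample sequences into permutations, adjust the constant term ($f(0)=1$ versus $p(0)=0$) to get $P(x)=F(x)-1$, and substitute the closed form from \cref{prop:DEsolution}. No gaps.
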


\begin{proof}
    By \cref{prop:DEsolution}, we have that the exponential generating function for the number of permutations that have exactly one run of length 3 or greater, that consists of exactly the last 3 elements, is:
     \[F(x) = 2 + \frac{\sqrt3}{2}e^{\frac{x}{2}}(x-1)\sec\left(\frac{\pi}{6} + x\frac{\sqrt3}{2}\right)\]
     
    Note that for $n \geq 1$, the coefficients of $x^n$ in $P(x)$ and $F(x)$ are equal. This is because the sampling process ending in $n$ turns corresponds to a permutation of length $n$, since the samples can be labelled $1$ to $n$ in chronological order - this results in a valid permutation counted by $F(x)$; and the probability of this happening is the number of valid permutations divided by $n!$. Also, the sampling process \emph{not} ending in $n$ turns corresponds to an \emph{invalid} permutation of length $n$, because we can chop off the sampling process or extend it to make it length $n$.

    Thus it remains to consider the coefficients of $x^0$. The probability of the process ending in 0 turns is 0, because it must take at least 3 turns. Hence $p(0) = 0$. By \cref{lem:rec_fgh}, we have that $f(0) = 1$, therefore $F(x)$ and $P(x)$ differ by 1; in conclusion, $P(x) = F(x) - 1$ and this implies the result.
\end{proof}

\begin{cor}\label{cor:mu_and_var}
    The limiting expectation and variance are:
    \[\mu = \frac{\sqrt{3e}}{\sqrt3C - S}\]
    \[\text{Var} = \frac{9eC + \sqrt{3e}S - 3e}{(\sqrt3C-S)^2}\]
    where $S=\sin\left(\frac{\sqrt3}{2}\right)$ and $C=\cos\left(\frac{\sqrt3}{2}\right)$. $\mu$ and Var are 7.9243724345... and 27.9813314059... respectively.
\end{cor}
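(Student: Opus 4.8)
The plan is to treat $P(x)$ as the ordinary probability generating function of the number of turns $T$ until three increasing values appear, namely $P(x) = \sum_{r}\Pr[T=r]\,x^{r} = E\!\left[x^{T}\right]$, and then read off $\mu$ and $\mathrm{Var}$ from the standard PGF identities $\mu = E[T] = P'(1)$ and $\mathrm{Var}(T) = E[T^{2}] - \mu^{2} = P''(1) + P'(1) - \bigl(P'(1)\bigr)^{2}$. Before differentiating I would record the consistency check $P(1)=1$, which holds because the factor $(x-1)$ in the closed form of \cref{thm:pgf} vanishes at $x=1$; this both confirms that $T$ is almost surely finite and legitimizes the moment formulas.

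To organize the computation, write $P(x) = 1 + u(x)v(x)$ with $u(x) = \frac{\sqrt3}{2}e^{x/2}(x-1)$ and $v(x) = \sec\theta(x)$, where $\theta(x) = \frac{\pi}{6} + \frac{\sqrt3}{2}x$ and hence $\theta'(x) = \frac{\sqrt3}{2}$. The key simplification is once more that $u(1)=0$: in $P'(1) = u'(1)v(1) + u(1)v'(1)$ and $P''(1) = u''(1)v(1) + 2u'(1)v'(1) + u(1)v''(1)$ every term carrying a factor $u(1)$ drops out, so I never need $v''$ and only the four quantities $u'(1), u''(1), v(1), v'(1)$ enter. A short differentiation gives $u'(x) = \frac{\sqrt3}{4}e^{x/2}(x+1)$ and $u''(x) = \frac{\sqrt3}{8}e^{x/2}(x+3)$, so that $u'(1) = u''(1) = \frac{\sqrt{3e}}{2}$.

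For the trigonometric factor I would apply the angle-addition formulae at $\theta(1) = \frac{\pi}{6} + \frac{\sqrt3}{2}$, which is precisely where the denominator $\sqrt3 C - S$ originates: $\cos\theta(1) = \frac{\sqrt3}{2}C - \frac12 S = \frac{\sqrt3 C - S}{2}$ and $\sin\theta(1) = \frac12 C + \frac{\sqrt3}{2}S = \frac{C + \sqrt3 S}{2}$. These give $v(1) = \frac{2}{\sqrt3 C - S}$ and, since $v'(x) = \frac{\sqrt3}{2}\sec\theta(x)\tan\theta(x)$, $v'(1) = \frac{\sqrt3\,(C + \sqrt3 S)}{(\sqrt3 C - S)^{2}}$. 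Substituting the four evaluations yields $P'(1) = \frac{\sqrt{3e}}{\sqrt3 C - S} = \mu$, and then collecting $P''(1) + P'(1) - \mu^{2}$ over the common denominator $(\sqrt3 C - S)^{2}$ produces the claimed value of $\mathrm{Var}$ (numerically $27.98\ldots$); note that the $-3e$ appearing in its numerator is exactly $-\mu^{2}(\sqrt3 C - S)^{2}$.

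The only genuine work is the algebraic collapse of the trigonometric expressions into the compact $C,S$ form, and in particular keeping the factors of $\sqrt e$ and $e$ straight through the cancellations $\sqrt{3e}\cdot\sqrt3 = 3\sqrt e$; this is exactly the step where a sign or a $\sqrt{\ }$ is easiest to lose. I would therefore guard the final closed forms by numerically checking them against $\mu \approx 7.92437$ and $\mathrm{Var} \approx 27.98133$.
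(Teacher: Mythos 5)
Your approach is exactly the paper's --- its proof is just the one-line appeal to $\mu = P'(1)$ and $\mathrm{Var} = P''(1)+P'(1)-P'(1)^2$ with the computation left to the reader --- and your evaluations $u'(1)=u''(1)=\tfrac{\sqrt{3e}}{2}$, $v(1)=\tfrac{2}{\sqrt3 C - S}$, $v'(1)=\tfrac{\sqrt3(C+\sqrt3 S)}{(\sqrt3 C - S)^2}$ are all correct. One caveat: if you actually carry out the final collection you get $\mathrm{Var}=\frac{9\sqrt{e}\,C+\sqrt{3e}\,S-3e}{(\sqrt3 C-S)^2}$, i.e.\ $9\sqrt{e}\,C$ where the corollary prints $9eC$; the printed numerator evaluates to roughly $76.0$ after dividing by $(\sqrt3 C - S)^2$, whereas the $9\sqrt{e}\,C$ version gives the quoted $27.98133\ldots$, so the statement contains a typo. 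Your proposed numerical guard would have caught this, but as written your claim that the algebra ``produces the claimed value'' is not literally true --- it produces the correct value and exposes the misprint.
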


\begin{proof}
    The expectation and variance are $P'(1)$ and $P''(1) + P'(1) - P'(1)^2$ respectively, which can be verified to be the stated values.
\end{proof}

\begin{rem}
    The limiting expectation can also be derived by computing the limit of the expression given for $E_3(n)$ in \cref{cor:continuousform}.
\end{rem}

\bibliographystyle{alpha}
\bibliography{bibliography}

\begin{thebibliography}{DB62}

\bibitem[DB62]{david62}
F.~N. David and D.~E. Barton.
\newblock {\em Combinatorial Chance}.
\newblock Lubrecht \& Cramer Ltd, 1962.

\bibitem[FS09]{flajolet09}
Philippe Flajolet and Robert Sedgewick.
\newblock {\em Analytic Combinatorics}.
\newblock Cambridge University Press, 2009.

\bibitem[GZ14]{gessel14}
Ira~M Gessel and Yan Zhuang.
\newblock Counting permutations by alternating descents.
\newblock {\em arXiv preprint arXiv:1408.1886}, 2014.

\bibitem[SI23]{oeis}
Neil J.~A. Sloane and The OEIS~Foundation Inc.
\newblock The on-line encyclopedia of integer sequences, 2023.
\newblock \url{http://oeis.org/?language=english}.

\end{thebibliography}
\addcontentsline{toc}{section}{References}
\nocite{*}

\end{document}